\newcommand{\bsigma}{\mathbf{\Sigma}}
\newcommand{\bpi}{\mathbf{\Pi}}
\newcommand{\bdelta}{\mathbf{\Delta}}
\newcommand{\bgamma}{\mathbf{\Gamma}}
\newcommand{\spaceofg}{\mathcal{G}}
\def\om{\alpha}
\newcommand{\m}[1]{\mathcal{#1}}
\newtheorem{theorem}{Theorem}[section]
\newtheorem{lemma}[theorem]{Lemma}
\newtheorem{proposition}{Proposition}
\begin{document}

\title{Descriptive complexity of subsets of the space of finitely generated groups}
\author{Mustafa G\"{o}khan Benl\.{I}}
\address{Department of Mathematics, Middle East Technical University, 06800, \c{C}ankaya, Ankara, Turkey\\
}
\email{benli@metu.edu.tr}

\author{Burak Kaya}
\address{Department of Mathematics, Middle East Technical University, 06800, \c{C}ankaya, Ankara, Turkey\\
}
\email{burakk@metu.edu.tr}

\keywords{finitely generated groups, marked groups, descriptive complexity, Borel hierarchy}
\subjclass[2010]{Primary 03E15, Secondary 37B05}

\begin{abstract} In this paper, we determine the descriptive complexity of subsets of the Polish space of marked groups defined by various group theoretic properties. In particular, using Grigorchuk groups, we establish that the sets of solvable groups, groups of exponential growth and groups with decidable word problem are $\bsigma^0_2$-complete and that the sets of periodic groups and groups of intermediate growth are $\bpi^0_2$-complete. We also provide bounds for the descriptive complexity of simplicity, amenability, residually finiteness, Hopficity and co-Hopficity. This paper is intended to serve as a compilation of results on this theme.
\end{abstract}

\maketitle

\tableofcontents

\section{Introduction}

Under appropriate coding and identification, one can form a compact Polish space of $n$-generated marked groups. This space was introduced in \cite{Grigorchuk84} with the aim of studying a Cantor set of groups with unusual properties related to growth and amenability and has been well-studied since then. (For example, see  \cite{Grigorchuk84}, \cite{Cham}, \cite{Gri05}, \cite{Champetier05} and \cite{CGP07}.) In addition to being an interesting mathematical object on its own, this space can also be used to show the existence of groups with certain properties, such as being amenable but not elementary amenable. (See \cite{Ste96} and \cite{WW2017} for examples of such arguments.)

A theme that has been explored, though not studied in its full extent, is the analysis of the interplay between group theoretic properties and topological properties of the sets that they define in this space.
 
In this paper, we aim to collect results and open questions on this theme and initiate a systematic study for future research. We also provide some important corollaries of a classical construction of Grigorchuk and its modifications, regarding solvability, periodicity, decidability and growth properties, which, as far as we know, have remained unnoticed. In particular, we prove the following.

\begin{theorem} Let $n > 1$ be an integer and $\spaceofg_n$ be the Polish space of $n$-generated marked groups. Then
\begin{itemize}
\item[a.] the set of solvable groups in $\spaceofg_n$ is $\bsigma^0_2$-complete.
\item[b.] the set of periodic groups in $\spaceofg_n$ is $\bpi^0_2$-complete. 
\item[c.] the set of groups with exponential growth in $\spaceofg_n$ is $\bsigma^0_2$-complete. 
\item[d.] the set of groups with intermediate growth in $\spaceofg_n$ is $\bpi^0_2$-complete. 
\item[e.] the set of groups with decidable word problem in $\spaceofg_n$ is $\bsigma^0_2$-complete. 
\end{itemize} 
\end{theorem}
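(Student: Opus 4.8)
The plan is to handle the upper and lower bounds separately. For the upper bounds I would arithmetize each group‑theoretic property over $\spaceofg_n$, which I identify as usual with the (closed) set of normal subgroups of the free group $F_n$ inside $2^{F_n}$. For the lower bounds I would build, for each item, a continuous reduction from a standard complete set, realized through Grigorchuk's family $\{G_\omega : \omega\in\{0,1,2\}^{\bN}\}$ and modifications of it. The one soft fact used throughout is that, for a fixed radius $r$, the multiplication table of the ball of radius $r$ of a marked group — hence in particular the growth value $\gamma_G(r)$ and the truth value of ``$w=_G 1$'' for a fixed $w\in F_n$ — depends only on finitely many coordinates of $G$, i.e.\ is locally constant on $\spaceofg_n$.

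\emph{Upper bounds.} For (a): $G$ is solvable iff $\exists d\,(G^{(d)}=1)$, and ``$G^{(d)}=1$'' asserts that each word in a fixed recursive set (the depth‑$d$ iterated commutators) is trivial in $G$, a closed condition; so the solvable groups form a $\bsigma^0_2$ set. For (b): $G$ is periodic iff $\forall w\,\exists m\geq 1\,(w^m=_G 1)$, with the inner clause open in $G$ for fixed $w$; so periodicity is $\bpi^0_2$. For (c) and (d): by submultiplicativity of $\gamma_G$ and Fekete's lemma, $\omega_G:=\lim_r\gamma_G(r)^{1/r}=\inf_r\gamma_G(r)^{1/r}$, so exponential growth, i.e.\ $\exists q\in\mathbb{Q}_{>1}\,\forall r\,(\gamma_G(r)\ge q^{\,r})$, is a countable union of closed sets and hence $\bsigma^0_2$; thus subexponential growth is $\bpi^0_2$, and as superpolynomial growth is $\forall d\,\forall C\,\exists r\,(\gamma_G(r)>C(r+1)^d)$, also $\bpi^0_2$, intermediate growth is $\bpi^0_2$. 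For (e): $G$ has decidable word problem iff there is an index $e$ with $\varphi_e$ a total $\{0,1\}$‑valued function whose $1$‑set is the word‑problem subgroup of $G$, and for each fixed $e$ the set of such $G$ is either empty or closed; so decidability of the word problem is $\bsigma^0_2$.

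\emph{Lower bounds via the groups $G_\omega$.} Here I would use that $\omega\mapsto(G_\omega,(a,b_\omega,c_\omega,d_\omega))$ is a continuous map $\{0,1,2\}^{\bN}\to\spaceofg_4$, together with the following: if $\omega$ is not eventually constant then $G_\omega$ is an infinite torsion group of intermediate growth; if $\omega$ is eventually constant then $G_\omega$ is infinite and virtually abelian, hence of polynomial growth, solvable, residually finite, and with decidable word problem; and the word problem of $G_\omega$ is undecidable whenever $\omega$ is not a computable sequence. Let $E\subseteq\{0,1,2\}^{\bN}$ be the ($\bsigma^0_2$‑complete) set of eventually constant sequences, so that its complement is $\bpi^0_2$‑complete. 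Then $\omega\mapsto G_\omega$ reduces $E$ to the solvable groups — if $\omega\notin E$ then $G_\omega$ has intermediate growth, hence is not solvable by the Milnor--Wolf theorem — and reduces the complement of $E$ to the periodic groups and to the groups of intermediate growth, since for $\omega\in E$ the group $G_\omega$ has an element of infinite order and polynomial growth. For (e) I would precompose with a continuous map $x\mapsto\omega(x)\colon 2^{\bN}\to\{0,1,2\}^{\bN}$: after first replacing $x$ by a sequence with no two consecutive $1$'s and the same ``eventually $0$'' status, set $\omega(x)_n=0$ at each position carrying a $1$ and $\omega(x)_n=1+H_j\in\{1,2\}$ at each position carrying a $0$, where $j$ counts the $1$'s before position $n$ and $H$ is a fixed noncomputable set; then $\omega(x)$ is eventually constant (hence computable) when $x$ is eventually $0$ and computes $H$ (hence is noncomputable) when $x$ has infinitely many $1$'s, so $G_{\omega(x)}$ has decidable word problem iff $x$ is eventually $0$. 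In each of these four cases the upper bound from the previous paragraph matches, yielding completeness.

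\emph{Exponential growth and general $n$.} Since every $G_\omega$ has subexponential growth, (c) requires a different family: I would replace $\{G_\omega\}$ by a modification $\{H_\omega\}$ of Grigorchuk's construction in which sufficiently long constant blocks of $\omega$ force lamplighter‑type behaviour, so that $H_\omega$ has exponential growth precisely when $\omega$ is eventually constant and intermediate growth otherwise, with $\omega\mapsto H_\omega$ still continuous; then $\omega\mapsto H_\omega$ reduces $E$ to the groups of exponential growth. For arbitrary $n>1$ one argues routinely: for $n\ge 4$ pad the generating tuples with trivial generators, a continuous and group‑preserving embedding $\spaceofg_4\hookrightarrow\spaceofg_n$; for $n=3$ use $(G_\omega,(a,b_\omega,c_\omega))$ since $d_\omega=b_\omega c_\omega$; and $n=2$ is dealt with by realizing the relevant constructions with two generators. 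I expect the main obstacle to lie in the group theory rather than in the descriptive bookkeeping: establishing, with uniform statements, the torsion/growth dichotomy for $G_\omega$ along ``eventually constant''; constructing the modified family $\{H_\omega\}$ for (c) and verifying that its growth is exponential on exactly a $\bsigma^0_2$‑complete set; and justifying that the word problem of $G_\omega$ is undecidable whenever $\omega$ is not computable.
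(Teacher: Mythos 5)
Your upper bounds are fine (your direct $\bpi^0_2$ description of superpolynomial growth even avoids the paper's appeal to Gromov's theorem, which it uses to make polynomial growth open), but the lower-bound reductions contain two genuine errors. First, for (b) you claim that $G_\omega$ is a torsion group whenever $\omega$ is not eventually constant; this is false. Grigorchuk's criterion is that $G_\omega$ is periodic if and only if each of the symbols $0,1,2$ occurs infinitely often in $\omega$ (the set $I$ in the paper); if, say, the symbol $2$ occurs only finitely often, then $G_\omega$ contains elements of infinite order even though $\omega\notin E$. Hence $\omega\mapsto G_\omega$ does not reduce $3^{\mathbb{N}}-E$ to the set of periodic groups, and your proof of (b) collapses. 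The paper instead reduces the $\bpi^0_2$-complete set $I$ (whose completeness it verifies by a separate small reduction from the non-eventually-constant sequences in $2^{\mathbb{N}}$) to the periodic groups.

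Second, the family as you describe it is not continuous: with the original definition, for eventually constant $\omega$ the group $G_\omega$ is virtually abelian and is a proper quotient of the limit of $G_{\omega'}$ as $\omega'\to\omega$ with $\omega'\notin E$; continuity holds only on $3^{\mathbb{N}}-E$. To obtain a continuous map on all of $3^{\mathbb{N}}$ one must replace the groups indexed by $E$ with these limits, which are virtually metabelian groups of \emph{exponential} growth (for constant $\omega$ isomorphic to $L\rtimes\mathbb{Z}_2$ with $L$ the lamplighter group, by Benli--Grigorchuk). This correction is not a side issue: it falsifies your statement that the groups at $E$ have polynomial growth, and it is exactly the ``modified family $\{H_\omega\}$'' you defer as an obstacle for (c) --- after the replacement, exponential growth holds precisely on $E$ and intermediate growth on its complement, so (a), (c), (d) come from this single corrected family, as in the paper. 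Your reductions for (a) and (d) do survive the correction (solvability and non-intermediate growth still hold at $E$), and your composition for (e) is workable though unnecessary, since the set $C$ of recursive sequences is itself $\bsigma^0_2$-complete and the corrected family has decidable word problem exactly on $C$. Finally, the case $n=2$ is not routine: padding only embeds $\spaceofg_m$ into $\spaceofg_n$ for $n\geq m$, and the paper has to invoke the specific $2$-generated subgroups $L_\omega=\langle d_\omega, a b_\omega\rangle$ of Benli--Grigorchuk, for which solvability, periodicity, growth and word-problem decidability are proved to track $E$, $I$ and $C$ exactly; some such input must be supplied rather than asserted.
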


This paper is organized as follows. In Section 2, we shall summarize the relevant definitions and results from descriptive set theory and recall the construction of the Polish space of $n$-generated marked groups, together with some convergence results in this space for expository purposes. In Section 3, we will first set up a framework to formalize the notion of ``having a property $P$" to prove some general facts regarding descriptive complexity, for example, to analyze the relationship between ``being $P$" and ``being virtually $P$." Then we will prove our main result and determine bounds for the descriptive complexity of various important group theoretic properties. In Section 4, we will briefly mention properties that potentially define non-Borel sets and conclude by listing some open questions that we think are of importance.

\section{Preliminaries}
\subsection{Borel hierarchy and complete sets}
In this subsection, we will cover some classical results from descriptive set theory that are necessary for the remainder of this paper. We refer the reader to \cite{Kechris95} for a general background.

Recall that a \textit{Polish space} is a completely metrizable separable topological space. For the rest of this subsection, let $X$ be an uncountable Polish space. The Borel $\sigma$-algebra of $X$, i.e., the $\sigma$-algebra generated by the open subsets of $X$, can be stratified as follows. By transfinite recursion, for every countable ordinal $1 \leq \alpha < \omega_1$, define the following collections of subsets of $X$:
\begin{itemize}
\item $\bsigma^0_1=\{U \subseteq X: U\ \text{is open}\}$,
\item $\bpi^0_{\alpha}=\{X-S: S \in \bsigma^0_{\alpha}\}$ and $\bdelta^0_{\alpha}=\bsigma^0_{\alpha} \cap \bpi^0_{\alpha}$ for every $1 \leq \alpha < \omega_1$, and
\item $\bsigma^0_{\alpha}=\{\bigcup A_n: A_n \in \bpi^0_{\gamma_n}, 1 \leq \gamma_n < \alpha, n \in \mathbb{N}\}$ for every $1 < \alpha < \omega_1$.
\end{itemize}
In the classical terminology, the sets in $\bdelta^0_1$, $\bsigma^0_1$, $\bpi^0_1$, $\bsigma^0_2$ and $\bpi^0_2$ are the clopen, open, closed, $F_{\sigma}$ and $G_{\delta}$ subsets of $X$ respectively. It is easily checked that the Borel $\sigma$-algebra of $X$ is exactly the union of these classes, which together constitute the \textit{Borel hierarchy} of $X$. The Borel hierarchy of $X$ can be pictured as follows, where every class in the diagram is a subset of the classes on right of it.
\begin{align*}
\ \ & \bsigma^0_1 & \ \ & \bsigma^0_2 & \ \ & \bsigma^0_3 & \ \ & \dots & \ \ & \bsigma^0_{\alpha} & \ \\
\bdelta^0_1 & \ \ & \bdelta^0_2 & \ \ & \bdelta^0_3 & \ \ & \dots & \ \ & \bdelta^0_{\alpha} & \ & \alpha<\omega_1\\
\ \ & \bpi^0_1 & \ \ & \bpi^0_2 & \ \ & \bpi^0_3 & \ \ & \dots & \ \ & \bpi^0_{\alpha} & \
\end{align*}
It is well-known that, for an uncountable Polish space $X$, the Borel hierarchy does not collapse, i.e., the containments in the diagram above are all proper. This hierarchy provides a notion of complexity for Borel sets. Intuitively speaking, where a Borel set resides in this hierarchy measures how difficult it is to ``define" this set.

Given a topological space $Y$ and subsets $A \subseteq X$ and $B \subseteq Y$, we say that $B$ is \textit{Wadge reducible} to $A$ (or, $A$ \textit{Wadge reduces} $B$), written $B \leq_W A$, if there exists a continuous map $f: Y \rightarrow X$ such that $f^{-1}[A]=B$.

Next will be introduced the notion of a complete set. Let $\bgamma^0_{\alpha}$ be one of the classes $\bsigma^0_{\alpha}$ and $\bpi^0_{\alpha}$. A set $A \subseteq X$ is said to be $\bgamma^0_{\alpha}$-\textit{complete} if $A$ is in $\bgamma^0_{\alpha}$ and for every zero-dimensional Polish space $Y$ and every $\bgamma^0_{\alpha}$-set $B \subseteq Y$ we have that $B \leq_W A$.

It is straightforward to check that if $A \subseteq X$ is $\bgamma^0_{\alpha}$-complete, then $X-A$ is $\overline{\bgamma^0_{\alpha}}$-complete, where $\overline{\bgamma^0_{\alpha}}$ denotes the complementary class of $\bgamma^0_{\alpha}$. Moreover, any set Wadge reducing a $\bgamma^0_{\alpha}$-complete set is $\bgamma^0_{\alpha}$-complete. The following characterization of complete sets is well-known.

\begin{theorem}\cite[22.10]{Kechris95} Let $X$ be a zero-dimensional Polish space and $A $ be a subset of $X$. Then $A$ is $\bsigma^0_{\alpha}$-complete (respectively, $\bpi^0_{\alpha}$-complete) if and only if $A$ is in $\bsigma^0_{\alpha}-\bpi^0_{\alpha}$ (respectively, in $\bpi^0_{\alpha}-\bsigma^0_{\alpha}$).
\end{theorem}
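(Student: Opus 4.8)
The plan is to establish the two directions separately, reducing the $\bpi^0_\alpha$-case to the $\bsigma^0_\alpha$-case at the outset: since the excerpt already records that $A$ is $\bgamma^0_\alpha$-complete if and only if $X-A$ is $\overline{\bgamma^0_\alpha}$-complete, and clearly $A \in \bsigma^0_\alpha-\bpi^0_\alpha$ if and only if $X-A \in \bpi^0_\alpha-\bsigma^0_\alpha$, it suffices to treat $\bsigma^0_\alpha$-completeness. I will also use throughout that every class $\bsigma^0_\alpha$ and $\bpi^0_\alpha$ is closed under continuous preimages, which follows by a routine induction on $\alpha$ from the defining clauses.

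For the forward direction, suppose $A$ is $\bsigma^0_\alpha$-complete. Then $A \in \bsigma^0_\alpha$ by definition, so the only thing to rule out is $A \in \bpi^0_\alpha$. If this held, then for any zero-dimensional Polish $Y$ and any $\bsigma^0_\alpha$ set $B \subseteq Y$, choosing a continuous $f\colon Y \to X$ with $f^{-1}[A]=B$ would give $B=f^{-1}[A] \in \bpi^0_\alpha$; applying this to $Y=\omega^\omega$ and a $\bsigma^0_\alpha$ subset of $\omega^\omega$ that fails to be $\bpi^0_\alpha$ --- available because the Borel hierarchy of the uncountable Polish space $\omega^\omega$ does not collapse --- contradicts the choice of $B$. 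Hence $A \in \bsigma^0_\alpha-\bpi^0_\alpha$.

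For the reverse direction, assume $A \in \bsigma^0_\alpha-\bpi^0_\alpha$; given an arbitrary zero-dimensional Polish $Y$ and a $\bsigma^0_\alpha$ set $B \subseteq Y$, I must exhibit a continuous $f\colon Y \to X$ with $f^{-1}[A]=B$. The tool I would use is Wadge's Lemma: for Borel subsets $P \subseteq Y$ and $Q \subseteq X$ of zero-dimensional Polish spaces, either $P \leq_W Q$ or $Q \leq_W Y-P$; this is obtained by feeding the Wadge game for the pair $(P,Q)$ into Borel determinacy, a winning strategy for the second player being exactly a continuous reduction $P \leq_W Q$ and a winning strategy for the first player one witnessing $Q \leq_W Y-P$. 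Applying this with $P=B$ and $Q=A$: the first alternative is what we want, and the second alternative, $A \leq_W Y-B$, is impossible, since $Y-B \in \bpi^0_\alpha$ together with closure of $\bpi^0_\alpha$ under continuous preimages would force $A \in \bpi^0_\alpha$. So $B \leq_W A$, and as $Y$ and $B$ were arbitrary, $A$ is $\bsigma^0_\alpha$-complete.

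The main obstacle is concentrated in the reverse direction, and more precisely in Wadge's Lemma, whose proof relies on Borel determinacy; everything else --- the forward direction, the closure of the pointclasses under continuous preimages, and the passage between the $\bsigma$- and $\bpi$-formulations --- is bookkeeping. For the only levels used in this paper, namely $\alpha=2$, one can sidestep determinacy by fixing a $\bsigma^0_2$-universal set over $\omega^\omega$ and carrying out a direct diagonalization to build the reduction from a given $\bsigma^0_2$ set into $A$ using $A \notin \bpi^0_2$; but the uniform statement for all countable $\alpha$ is cleanest via Wadge's Lemma.
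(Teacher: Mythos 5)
Your proof is correct and follows essentially the same route as the source the paper cites for this statement (the paper gives no proof of its own, quoting \cite[22.10]{Kechris95}): the easy direction via closure of the classes under continuous preimages plus non-collapse of the hierarchy on $\omega^\omega$, and the hard direction via Wadge's Lemma, itself a consequence of Borel determinacy applied to the Wadge game. The only detail left implicit, and also suppressed at this level in the standard treatment, is the transfer of Wadge's Lemma from the Baire space to arbitrary zero-dimensional Polish spaces (embed as a closed subset of $\omega^\omega$ and use a continuous retraction), which is routine.
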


Using this, one can show that any countable dense subset $A$ of a zero-dimensional perfect Polish space is $\bsigma^0_2$-complete as follows. On the one hand, since singletons are closed and $A$ is countable, $A$ is in $\bsigma^0_2$. On the other hand, it follows from the Baire category theorem that $A$ is not in $\bpi^0_2$ and hence, $A$ is $\bsigma^0_2$-complete. For example, the set $E$ of eventually constant sequences and the set $C$ of recursive sequences in the Polish space $3^{\mathbb{N}}:=\{0,1,2\}^{\mathbb{N}}$ are $\bsigma^0_2$-complete, being both countable and dense. An example of a $\bpi^0_2$-complete set that will be used in this paper is the set
\[I=\{(a_n) \in 3^{\mathbb{N}}: \forall \ell\ \exists i,j,k \geq \ell\ \ a_i=0\ \wedge\ a_j=1\ \wedge\ a_k=2 \}\]
that is, the set of sequences in $3^{\mathbb{N}}$ which contain infinitely many 0's, 1's and 2's. Observe that $I$ is in $\bpi^0_2$ and that the set of non-eventually constant sequences in the Cantor space $2^{\mathbb{N}}$, which is also $\bpi^0_2$-complete, is Wadge reduced to $I$ via the continuous map $(a_n) \mapsto (a_0,2,a_1,2,\dots)$. Thus, $I$ is $\bpi^0_2$-complete.

\subsection{The Polish space of $n$-generated marked groups} In this subsection, following the constructions in \cite{Grigorchuk84} and \cite{Champetier05}, we shall construct the Polish space of $n$-marked groups.  In order to avoid trivialities, we shall assume for the rest of the paper that $n > 1$.

Recall that an \textit{$n$-marked group} $(G,S)$ consists of a group $G$ together with a finite sequence of generators $S=(s_1,s_2,\dots,s_n)$. We allow the generating set to have repetitions and the identity element. Two $n$-marked groups $(G,(s_1,s_2,\dots,s_n))$ and $(H,(r_1,r_2,\dots,r_n))$ are said to be \textit{isomorphic} if there exists a group isomorphism $f: G \rightarrow H$ such that $f(s_i)=r_i$ for all $1 \leq i \leq n$.

Let $\mathbb{F}_n$ denote the free group with basis $\{\gamma_1,\gamma_2,\dots,\gamma_n\}$. For every $n$-marked group $(G,(s_1,s_2,\dots,s_n))$, there is a canonical epimorphism $f: \mathbb{F}_n \rightarrow G$ such that $f(\gamma_i)=s_i$ for all $1 \leq i \leq n$. Therefore, every $n$-marked group induces a normal subgroup $\mathbb{F}_n$, namely, the kernel of $f$. Conversely, every normal subgroup $N \trianglelefteq \mathbb{F}_n$ induces an $n$-marked group, namely, $(\mathbb{F}_n/N, (\gamma_1 N, \gamma_2 N, \dots, \gamma_n N))$. For the sake of readability, we shall denote the marked group $(\mathbb{F}_n/N, (\gamma_1 N, \gamma_2 N, \dots, \gamma_n N))$ by $(G_N,S_N)$.

It is readily verified that two $n$-marked groups are isomorphic if and only if they induce the same normal subgroup of $\mathbb{F}_n$. Consequently, for the purpose of coding these groups as the points of a Polish space, we may assume without loss of generality that the set of $n$-marked groups is the set
\[\spaceofg_n = \{ N \in \mathcal{P}(\mathbb{F}_n): N \trianglelefteq \mathbb{F}_n\}\]
One may endow $\mathcal{P}(\mathbb{F}_n)$ with a natural Polish topology induced by the (ultra)metric
\[ d(N,K)=\begin{cases} 0 & \text{if } N=K\\ \displaystyle 2^{-{min\{i: \mathbf{g}_i \in N \Delta K \}}} & \text{if } N \neq K\end{cases}\]
where $(\mathbf{g}_i)_{i \in \mathbb{N}}$ is a fixed enumeration of elements of $\mathbb{F}_n$. This metric is equivalent to the metric defined in \cite[2.2.a]{Champetier05}. Moreover, if one also identifies $\mathcal{P}(\mathbb{F}_n)$ with the Cantor space $\{0,1\}^{\mathbb{F}_n}$ consisting of binary sequences indexed by $\mathbb{F}_n$, this metric is compatible with the product topology on $\{0,1\}^{\mathbb{F}_n}$ where each component has the discrete topology.

It is straightforward to check that the set $\spaceofg_n$ is a closed subset of $\mathcal{P}(\mathbb{F}_n)$ and hence forms a compact zero-dimensional Polish space under the subspace topology. This topology is sometimes referred to as Grigorchuk topology or Chabauty topology. (See \cite{Cha50}.)

Next will be introduced two other metrics that induce the topology of $\spaceofg_n$. For every $N$ in $\spaceofg_n$, let $W(N,k)$ denote the set of elements of $N$ with length at most $k$. Consider the metric on $\spaceofg_n$ given by
\[ \nu(N,K)=\begin{cases} 0 & \text{if } N=K\\ \displaystyle 2^{-\max\{k: W(N,k)=W(K,k)\}} & \text{if } N \neq K \end{cases}\]
That is, the distance between the marked groups $(G_N,S_N)$ and $(G_K,S_K)$ is $2^{-k}$ if and only if $k$ is the maximum integer such that the sets of words with symbol set $\{\gamma_1^{\ \pm 1}, \dots, \gamma_n^{\ \pm 1}\}$ of length at most $k$ representing the identity in $G_N$ and $G_K$ respectively are the same.

Consider the Cayley graph $(G_N,S_N)$ as a rooted directed labeled graph where the root is the identity element and the label set is $\{\gamma_1, \gamma_2, \dots, \gamma_n\}$. Let $\mathcal{B}[N,k]$ denote the closed ball with radius $k$ centered at the root in this Cayley graph. Consider the metric on $\spaceofg_n$ given by
\[ \mu(N,K)=\begin{cases} 0 & \text{if } N=K\\ \displaystyle 2^{-\max\{k: \mathcal{B}[N,k] \cong \mathcal{B}[K,k]\}} & \text{if } N \neq K \end{cases}\]
where isomorphism is understood as the isomorphism of rooted directed labelled graphs. That is, the distance between two marked groups is $2^{-k}$ if and only if $k$ is the maximum integer such that the closed balls with radius $k$ centered at the identity element in the Cayley graphs of these marked groups are isomorphic.

A moment's thought reveals that $W(N,2k+1)=W(K,2k+1)$ if and only if $\mathcal{B}[N,k]\cong\mathcal{B}[K,k]$, that is, $\nu(N,K) \leq 2^{-(2k+1)}$ if and only if $\mu(N,K) \leq 2^{-k}$. It follows that $\mu$ and $\nu$ are equivalent metrics. Moreover, it is not difficult to check that $\mu$ and $\nu$ are compatible with the topology of $\spaceofg_n$. Since some of our arguments are easier to understand if one adopts the suitable metric, we shall be working with any of $\mu$, $\nu$ and $d$ when needed.

Let us mention some important notational remarks. For the rest of the paper, while referring to points of the space $\spaceofg_n$, we may abuse the notation and simply write an abstract marked group $(G,S)$ while we indeed refer to an element $N$ of $\spaceofg_n$ such that $(G_N,S_N)$ is isomorphic to $(G,S)$. We will also write $B_{\eta}(N,k)$ to denote the open ball centered at $N$ with radius $k$ with respect to the metric $\eta$, where $\eta$ may be any of $\nu$, $\mu$ or $d$.

Note that, the Polish space $\spaceofg_1$ is homeomorphic to $\{\frac{1}{k}: k \in \mathbb{N}^+\} \cup \{0\}$ together with the Euclidean topology, where $0$ corresponds to the infinite cyclic and $\frac{1}{k}$ correspond to the finite cyclic groups. Observe also that the Polish space $\spaceofg_n$ embeds into $\spaceofg_{n+1}$ via the map $(G,(s_1,\dots,s_n)) \mapsto (G,(s_1,\dots,s_n,1_G))$.

Having introduced three compatible metrics with the topology of $\spaceofg_n$, we shall next recall some simple facts regarding limits of various sequences of marked groups in $\spaceofg_n$. Most of these appear either explicitly or implicitly in \cite{Grigorchuk84}. For the self-containment of this paper as a survey, we shall provide the proofs of these facts, some of which appear to not have been explicitly written.

Let $P$ be a property of groups. Recall that a group $G$ is said to be \textit{fully residually $P$} if for every positive integer $m$ and any distinct elements $g_1,\ldots,g_m \in G$, there exist a group $H$ with the property $P$ and a homomorphism $\varphi :G \to H$ such that $\varphi(g_1),\ldots,\varphi(g_m)\in H$ are distinct. It is readily verified that if the property $P$ is preserved under (finite) direct products, then the class of fully residually $P$ groups coincides with the class of \textit{residually $P$ groups}, that is, those groups that satisfy the condition stated above for $m=2$. It turns out that fully residually $P$ groups are limits of groups with the property $P$ in $\spaceofg_n$.

\begin{proposition}\cite{Champetier05}\label{residuallypconverge}
Suppose that $P$ is a property of groups which is inherited by subgroups. Let $N \in \spaceofg_n$ be such that $G_N$ is fully residually $P$. Then $(G_N,S_N)$ is a limit of marked groups with the property $P$.
\end{proposition}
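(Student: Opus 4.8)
The plan is to show that the marked group $(G_N, S_N)$ is approximated in $\spaceofg_n$ by finitely generated groups with property $P$ obtained as quotients of $G_N$. Recall that $(G_N, S_N)$ corresponds to a normal subgroup $N \trianglelefteq \mathbb{F}_n$, and two marked groups are close in the metric $\nu$ precisely when the corresponding normal subgroups agree on a large ball of $\mathbb{F}_n$; equivalently, by the remarks preceding the statement, when their Cayley balls are isomorphic up to a large radius. So it suffices to produce, for each $k$, a normal subgroup $N_k \trianglelefteq \mathbb{F}_n$ with $G_{N_k}$ having property $P$ and $W(N_k, k) = W(N, k)$.

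First I would fix $k$ and enumerate the (finitely many) elements $g_1, \dots, g_m$ of the ball of radius $k$ in $G_N$, i.e.\ the images in $G_N$ of the words in $\{\gamma_1^{\pm 1}, \dots, \gamma_n^{\pm 1}\}$ of length at most $k$; these are finitely many and pairwise distinct. Since $G_N$ is fully residually $P$, there is a group $H$ with property $P$ and a homomorphism $\varphi \colon G_N \to H$ that is injective on $\{g_1, \dots, g_m\}$. Let $\pi \colon \mathbb{F}_n \to G_N$ be the canonical epimorphism with kernel $N$, set $N_k := \ker(\varphi \circ \pi) \trianglelefteq \mathbb{F}_n$, and note $N \subseteq N_k$, so $G_{N_k} = \mathbb{F}_n / N_k$ is a quotient of $G_N$, generated by the images of $\gamma_1, \dots, \gamma_n$; thus $(G_{N_k}, S_{N_k})$ is a genuine point of $\spaceofg_n$. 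Moreover $G_{N_k}$ embeds into $H$ via the map induced by $\varphi \circ \pi$, so $G_{N_k}$ is (isomorphic to) a subgroup of $H$; since $P$ is inherited by subgroups, $G_{N_k}$ has property $P$.

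It remains to check that $(G_{N_k}, S_{N_k})$ is within distance $2^{-k}$ of $(G_N, S_N)$, i.e.\ $W(N_k, k) = W(N, k)$. The inclusion $W(N, k) \subseteq W(N_k, k)$ is immediate from $N \subseteq N_k$. For the reverse inclusion, suppose $w \in \mathbb{F}_n$ has length at most $k$ and $w \in N_k$; then $\varphi(\pi(w)) = 1_H$, and $\pi(w)$ is one of the finitely many ball elements (or the identity, which is among the $g_i$ or handled separately) on which $\varphi$ is injective, so $\varphi(\pi(w)) = 1_H = \varphi(1_{G_N})$ forces $\pi(w) = 1_{G_N}$, i.e.\ $w \in N$. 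Hence $W(N_k, k) = W(N, k)$, so $\nu((G_{N_k}, S_{N_k}), (G_N, S_N)) \le 2^{-k}$. Letting $k \to \infty$ shows $(G_N, S_N)$ is a limit of marked groups with property $P$.

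The only mild subtlety — hardly an obstacle — is bookkeeping with the canonical epimorphism and making sure the quotient $G_{N_k}$ is literally identified with an element of $\spaceofg_n$ on the correct generating tuple; this is exactly the normalization established in the construction of $\spaceofg_n$, so no extra work is needed. One should also be slightly careful to include the identity element among the finitely many elements required to be separated by $\varphi$ (equivalently, to require $\varphi$ injective on the ball together with $1_{G_N}$), which is automatic since $1_{G_N}$ lies in every ball of radius $\geq 0$. Everything else is routine.
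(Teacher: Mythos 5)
Your proof is correct and is essentially the paper's argument: in both cases one separates the radius-$k$ ball of $G_N$ by a homomorphism $\varphi$ to a group with property $P$ and takes the marked group generated by the images of the marked generators (your $G_{N_k}=\mathbb{F}_n/\ker(\varphi\circ\pi)$ is exactly the marked subgroup $\langle\varphi[S_N]\rangle$ used in the paper), with heredity of $P$ under subgroups giving property $P$. The only difference is bookkeeping: you verify closeness via the $\nu$-metric identity $W(N_k,k)=W(N,k)$, whereas the paper phrases it as an isomorphic embedding of Cayley balls in the $\mu$-metric; these are equivalent by the remarks in Section 2.2.
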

\begin{proof}
Let $m$ be a positive integer. Since $G_N$ is fully residually $P$, there exist a group $H_m$ with the property $P$ and a homomorphism $\varphi_m: G_N \rightarrow H_m$ such that the images of the elements in $\mathcal{B}[N,m]$ under $\varphi_m$ are all distinct. Now set $K_m$ to be the subgroup $\langle \varphi_m[S_N] \rangle \leqslant H_m$. Then $\mathcal{B}[N,m]$ embeds into the Cayley graph of $(K_m,\varphi_m[S_N])$. Moreover, by the inheritance of $P$ by subgroups, $K_m$ has the property $P$. It can be checked that $$\mu((G_N,S_N),(K_m,\varphi_m[S_N])) \leq 2^{-m}$$ This shows that $G_N$ is a limit of groups with the property $P$.\end{proof}

We will now prove that any group in $\spaceofg_n$ is a limit of finitely presented groups and that finitely presented groups have neighborhoods which entirely consists of their quotients.

\begin{proposition} \cite{Grigorchuk84} \label{finitelypresentedconverge} For every $N \in\spaceofg_n$, there exists a sequence of finitely presented groups with limit $(G_N,S_N)$.\end{proposition}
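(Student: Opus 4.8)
The plan is to approximate the normal subgroup $N \trianglelefteq \mathbb{F}_n$ by finitely generated (hence finitely presented, since $\mathbb{F}_n$ is finitely generated) normal subgroups $N_k$ obtained by truncating the generating data of $N$. Concretely, fix the enumeration $(\mathbf{g}_i)_{i\in\mathbb{N}}$ of $\mathbb{F}_n$ used to define the metric $d$, and for each $k$ let $N_k$ be the normal closure in $\mathbb{F}_n$ of the finite set $N \cap \{\mathbf{g}_0,\dots,\mathbf{g}_k\}$. Then $N_k \trianglelefteq \mathbb{F}_n$, so $N_k \in \spaceofg_n$, and the quotient $\mathbb{F}_n/N_k$ is finitely presented, being a finitely generated group with finitely many defining relators. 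It remains to show $N_k \to N$ in $\spaceofg_n$.

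**Convergence.**

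I would verify convergence using the metric $d$: since each $N_k \subseteq N$, we have $N_k \Delta N = N \setminus N_k$, so it suffices to show that for every $i$, eventually $\mathbf{g}_i \notin N \setminus N_k$, i.e., eventually $\mathbf{g}_i \in N_k$ whenever $\mathbf{g}_i \in N$. This is immediate: if $\mathbf{g}_i \in N$, then for all $k \geq i$ the element $\mathbf{g}_i$ is one of the chosen generators of $N_k$, so $\mathbf{g}_i \in N_k$. Hence for $k \geq i$ we get $\mathbf{g}_i \notin N_k \Delta N$, which forces $d(N_k, N) \leq 2^{-(i+1)} \to 0$ as $k \to \infty$. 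Therefore $(G_{N_k}, S_{N_k}) \to (G_N, S_N)$, and the sequence $(G_{N_k}, S_{N_k})$ consists of finitely presented marked groups.

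**Remarks on the obstacle.**

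There is essentially no hard step here; the only points requiring a line of care are (i) that $N_k$ is genuinely normal in $\mathbb{F}_n$ — which holds because we take the \emph{normal} closure, not merely the subgroup generated — and (ii) that a finitely generated group with a finite presentation as a quotient of $\mathbb{F}_n$ is finitely presented in the usual sense, which is the definition. If one instead prefers to argue with the metric $\nu$ (or $\mu$), one notes that $W(N_k, m) = W(N, m)$ once $k$ is large enough that $\{\mathbf{g}_0,\dots,\mathbf{g}_k\}$ contains every element of $N$ of length at most $m$ — a finite set — giving $\nu(N_k, N) \leq 2^{-m}$ for all such $k$. Either formulation yields the claim; the argument using $d$ is marginally more direct.
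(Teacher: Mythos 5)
Your proof is correct and takes essentially the same route as the paper: truncate a set of defining relators of $N$ and pass to normal closures of the resulting finite sets (the paper enumerates the elements of $N$ and closes off initial segments, you intersect $N$ with an initial segment of the enumeration of $\mathbb{F}_n$; the paper checks convergence with $\nu$, you with $d$). The only cosmetic point is that the bound $d(N_k,N)\le 2^{-(i+1)}$ requires $\mathbf{g}_j \notin N_k \Delta N$ for \emph{every} $j \le i$, not just $j=i$, but your argument already gives this since $j \le i \le k$ puts each such $\mathbf{g}_j \in N$ among the chosen generators of $N_k$.
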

\begin{proof} Let $N$ be $\spaceofg_n$ and $(w_i)_{i \in  \mathbb{N}}$ be an enumeration of the words in $N$. For every $m \in \mathbb{N}$, set $N_m$ to be the normal closure of $\{w_i: 0 \leq i \leq m\}$ in $\mathbb{F}_n$, that is, $N_m \in \spaceofg_n$ is such that
\[ G_{N_m}=\langle \gamma_1,\dots,\gamma_n |\ w_i,\ 0 \leq i \leq m \rangle\]
It is straightforward to check that, with respect to $\nu$, the sequence $(N_m)_{m \in \mathbb{N}}$ converges to $N$.
\end{proof}

\begin{proposition}\cite[Lemma 2.3]{Champetier05}\label{finitelypresentedquotient} Let $N \in \spaceofg_n$ be such that $G_N$ is finitely presented. Then there exists $\epsilon>0$ such that every element of $B_{\mu}(N,\epsilon)$ is a quotient of $G_N$.\end{proposition}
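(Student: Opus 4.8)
The plan is to exploit the finite presentation of $G_N$ to control how deep the defining relators lie in $\mathbb{F}_n$, and then choose $\epsilon$ so that every marked group sufficiently close to $N$ still satisfies all of those relators. Since $G_N$ is finitely presented, $N$ is the normal closure in $\mathbb{F}_n$ of a finite set of words $\{w_1,\dots,w_k\}$; let $L=\max_{1\le i\le k}|w_i|$ denote the maximal length of a relator.

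Next I would fix an integer $m$ with $2m+1\ge L$ and set $\epsilon=2^{-m}$. Let $K\in B_{\mu}(N,\epsilon)$, so that $\mathcal{B}[N,m]\cong\mathcal{B}[K,m]$ as rooted directed labelled graphs. By the observation recorded earlier in Section 2, namely that $W(N,2m+1)=W(K,2m+1)$ if and only if $\mathcal{B}[N,m]\cong\mathcal{B}[K,m]$, we obtain $W(N,2m+1)=W(K,2m+1)$. Since each $w_i$ has length at most $L\le 2m+1$ and lies in $N$, it lies in $W(N,2m+1)=W(K,2m+1)\subseteq K$. Hence $K$ is a normal subgroup of $\mathbb{F}_n$ containing each $w_i$, and therefore contains the normal closure of $\{w_1,\dots,w_k\}$; that is, $N\subseteq K$.

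Finally, from $N\trianglelefteq K\trianglelefteq\mathbb{F}_n$ the correspondence theorem yields a canonical epimorphism $G_N=\mathbb{F}_n/N\twoheadrightarrow\mathbb{F}_n/K=G_K$ with $\gamma_i N\mapsto\gamma_i K$, which in the language of marked groups says precisely that $(G_K,S_K)$ is a marked quotient of $(G_N,S_N)$. As $K\in B_{\mu}(N,\epsilon)$ was arbitrary, this completes the argument.

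As for the \emph{main obstacle}: there is essentially no deep difficulty here; the only point demanding care is the bookkeeping among the three equivalent metrics, i.e.\ making sure that a chosen radius in the $\mu$-metric really does force ``all relators are seen.'' This is exactly the content of the $W(N,2k+1)\leftrightarrow\mathcal{B}[N,k]$ correspondence already established, so it causes no trouble. (Equivalently, one may run the same computation with the metric $\nu$, where $\nu(N,K)\le 2^{-(2m+1)}$ directly gives $W(N,2m+1)=W(K,2m+1)$, and then invoke the equivalence of $\mu$ and $\nu$ to produce the required $\epsilon$ for $B_{\mu}$.)
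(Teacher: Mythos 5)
Your proof is correct and takes essentially the same approach as the paper: bound the lengths of the finitely many defining relators of $G_N$, choose $\epsilon$ so that every $K$ in the $\epsilon$-ball still contains all those relators (hence contains $N$ itself), and conclude that $G_K=\mathbb{F}_n/K$ is a marked quotient of $G_N=\mathbb{F}_n/N$. The only difference is that you make explicit the $\mu$-versus-$W(\cdot,2m+1)$ bookkeeping that the paper's one-line ``by definition'' leaves implicit.
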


\begin{proof} Let $\langle\gamma_1\ldots,\gamma_n \mid w_i,\; 0 \leq i \leq m  \rangle $ be a finite presentation of $G_N$. Set $k$ to be $\max\{ |w_i| , 0 \leq i \leq m\} $ and choose $0 < \epsilon < 2^{-k}$. Then, by definition, for every $M \in B_{\mu}(N,\epsilon)$, the words $w_i$ where $1 \leq i \leq m$ are relations of $G_M$ and hence, $G_M$ is a quotient of $G_N$.\end{proof}

Before concluding this section, we shall introduce a definition that is used to characterize groups which are limits of groups with a specific property. Let $P$ be a property of groups. A group $G$ is called \textit{locally embeddable into groups with the property $P$} (in short, LEP) if for every finite subset $E\subseteq G$ there exist a group $H$ with the property $P$ and a \textit{local embedding on $E$ to $H$}, that is, a map $\phi: E \cup (E \cdot E) \to H$ such that $\phi$ is injective on $E$ and $\phi(gh)=\phi(g)\phi(h)$ for every $g,h\in E$.


\begin{proposition}\cite{Ver97}\label{lep} Let $P$ be a property of groups which is inherited by subgroups. Then, $G_N \in \mathcal{G}_n$ is a limit of marked groups in $\mathcal{G}_n$ with the property $P$ if and only if
$G_N$ is $LEP$.\end{proposition}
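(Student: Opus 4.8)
The plan is to prove the two implications separately, with the metric $\mu$ serving as the translation device in both directions: recall that $\mu((G_N,S_N),(K,R)) \leq 2^{-m}$ holds exactly when the rooted directed labelled graphs $\mathcal{B}[N,m]$ and $\mathcal{B}[K,m]$ are isomorphic. Thus ``$(G_N,S_N)$ is a limit of marked groups with property $P$'' unwinds to the statement that for every $m$ there is a marked group $(K,R) \in \spaceofg_n$ with $K$ having property $P$ and $\mathcal{B}[N,m] \cong \mathcal{B}[K,m]$; and the LEP hypothesis says that every ball $\mathcal{B}[N,r]$, viewed as a finite piece of the multiplication table, admits a local embedding into a group with property $P$.

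For the implication ``LEP $\Rightarrow$ limit'', I would fix $m$ and apply the LEP hypothesis to the finite set $E = \mathcal{B}[N,2m]$, obtaining a group $H$ with property $P$ and a map $\phi \colon E \cup (E\cdot E) \to H$ which is injective on $E$ and satisfies $\phi(gh) = \phi(g)\phi(h)$ for $g,h \in E$. A short preliminary computation gives $\phi(1_{G_N}) = 1_H$ and $\phi(g^{-1}) = \phi(g)^{-1}$ for $g \in E$ (all relevant elements lie in the symmetric ball $E$). Set $K = \langle \phi(s_1),\dots,\phi(s_n) \rangle \leq H$ with marking $R = (\phi(s_1),\dots,\phi(s_n))$; since $P$ passes to subgroups, $K$ has property $P$. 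The heart of this direction is to verify, by induction on word length and using the multiplicativity of $\phi$ on $E$ (available because all partial products that occur stay inside $\mathcal{B}[N,2m]$), that the restriction $\phi|_{\mathcal{B}[N,m]}$ is an isomorphism of rooted labelled graphs onto $\mathcal{B}[K,m]$: it preserves the root; it carries the $\gamma_i$-edge from $g$ to $gs_i$ to the $\gamma_i$-edge from $\phi(g)$ to $\phi(g)\phi(s_i) = \phi(gs_i)$; it is injective since $\mathcal{B}[N,m] \subseteq E$; it maps into $\mathcal{B}[K,m]$ because $\phi$ of a geodesic word of length $\leq m$ is a product of at most $m$ of the $\phi(s_i)^{\pm 1}$; and it is onto $\mathcal{B}[K,m]$ by the same computation read backwards. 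One also checks there are no spurious edges in $\mathcal{B}[K,m]$: if $\phi(g)r_i = \phi(g')$ with $g,g' \in \mathcal{B}[N,m]$, then $\phi(gs_i) = \phi(g')$ and injectivity of $\phi$ on $E \supseteq \mathcal{B}[N,m+1]$ forces $g' = gs_i$. Hence $\mu((G_N,S_N),(K,R)) \leq 2^{-m}$, and letting $m \to \infty$ shows that $(G_N,S_N)$ is a limit of marked groups with property $P$.

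For the converse ``limit $\Rightarrow$ LEP'', I would fix a finite set $E \subseteq G_N$, choose $m_0$ with $E \subseteq \mathcal{B}[N,m_0]$, and put $m = 2m_0$, so that $E \cup (E \cdot E) \subseteq \mathcal{B}[N,m]$. By hypothesis there is a marked group $(K,R)$ with $K$ having property $P$ and a rooted labelled graph isomorphism $\Psi \colon \mathcal{B}[N,m] \to \mathcal{B}[K,m]$. Take $\phi = \Psi|_{E \cup (E\cdot E)}$ with codomain the group $K$. Injectivity on $E$ is inherited from $\Psi$. For multiplicativity, given $g,h \in E$ write $h$ as a geodesic word $s_{i_1}^{\epsilon_1}\cdots s_{i_\ell}^{\epsilon_\ell}$ with $\ell = |h| \leq m_0$; every partial product $g\,s_{i_1}^{\epsilon_1}\cdots s_{i_j}^{\epsilon_j}$ has length at most $m_0 + m_0 = m$, hence lies in $\mathcal{B}[N,m]$, and since $\Psi$ respects the labelled edges one gets, by induction on $j$, that $\Psi(g\,s_{i_1}^{\epsilon_1}\cdots s_{i_j}^{\epsilon_j}) = \Psi(g)\,r_{i_1}^{\epsilon_1}\cdots r_{i_j}^{\epsilon_j}$; specializing $g = 1_{G_N}$ (so $\Psi(1_{G_N}) = 1_K$) yields $\Psi(h) = r_{i_1}^{\epsilon_1}\cdots r_{i_\ell}^{\epsilon_\ell}$, and therefore $\phi(gh) = \phi(g)\phi(h)$. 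Thus $\phi$ is a local embedding of $E$ into a group with property $P$, so $G_N$ is LEP.

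The main obstacle is not conceptual but bookkeeping with radii: one must pick the sizes of the balls (here $2m$ against $m$, and $2m_0$ against $m_0$) so that every product arising in an induction step still lies in the region where $\phi$ — respectively $\Psi$ — is known to be multiplicative, and one must keep in mind that injectivity is only guaranteed on $E$, not on $E \cdot E$, so the verification that $\phi|_{\mathcal{B}[N,m]}$ is a genuine graph isomorphism may only invoke edges whose endpoints lie in that smaller ball. A minor but necessary point is that $\phi$ sends the identity to the identity and inverses to inverses, which is what lets the induction handle negative exponents and lets $\Psi$ be treated as a rooted graph isomorphism.
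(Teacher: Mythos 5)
Your proof is correct and follows essentially the same route as the paper's: in the forward direction you restrict a local embedding (applied to a large enough ball) to the subgroup generated by the images of the marked generators and show the balls of radius $m$ agree, and in the converse you restrict the ball isomorphism coming from $\mu$-convergence to $E \cup (E\cdot E)$. The only difference is that you carry out explicitly, with careful radius bookkeeping, the verifications the paper leaves to the reader (``It can be checked that \dots'', ``one can easily define a local embedding'').
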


\begin{proof} 
Suppose $G_N$ is $LEP$. As $G_N$ is countable, there exist finite subsets $E_1 \subseteq E_2 \subseteq \dots$ such that $G_N=\bigcup_{k \in \mathbb{N}^+} E_k$. Since $G$ is $LEP$, for every $k \in \mathbb{N}^+$, we have a local embedding $\phi_k : E_k \to F_k$ to a group $F_k$ with the property $P$. Let $k$ be large enough so that $S_N \subseteq E_k$. Set  $H_k=\langle\phi_k[S] \rangle\le F_k$. Then the marked group
$(H_k,\phi[S_k])$ belongs to $\mathcal{G}_n$ and, by the inheritance of $P$ by subgroups, $H_k$ has the property $P$.

We claim that the sequence $(H_k,\phi_k[S])$ converges to $(G_N,S_N)$. Given $m>0$, let $k$ be large enough so that $E_k $ contains $\mathcal{B}[N,m]$. Thus, $\mathcal{B}[N,m]$ embeds into the Cayley graph of $(H_k,\phi_k[S])$. Thus we have 
$$\mu((G_N,S_N),(H_k,\phi_k[S_N])) \leq 2^{-m}$$ 
Conversely, suppose that there is a sequence $(H_k,S_k)=(G_{N_k},S_{N_k})$ of groups with the property $P$ in $\mathcal{G}_n$ converging to $(G_N,S_N)$. Given any finite subset $E$ of $G$, let $m$ be large enough so that $E\cdot E\subseteq \mathcal{B}[N,m]$ and let $k$ be large 
enough so that $\mathcal{B}[N,m]\cong \mathcal{B}[N_k,m]$. Using this isomorphism, one can easily define a local embedding $\phi:E\cup (E \cdot E) \to H_k$.\end{proof}

\section{Group theoretic properties as topological properties}

In this section, we shall analyze the descriptive complexity of subsets of $\spaceofg_n$ defined by group theoretic properties. In other words, we will determine at which level of the Borel hierarchy the set $$\{N \in \spaceofg_n: G_N \text{ is } P\}$$ resides for various properties $P$.

\subsection{Definability of properties} In order to formalize the notion of ``a group being $P$" for a property $P$, we are going to use the first-order infinitary logic $\mathcal{L}_{\omega_1 \omega}$ in the language of group theory together with constant symbols for every element of $\mathbb{F}_n$. Recall that, one is allowed to have finite quantifications but countable conjunctions and disjunctions in $\mathcal{L}_{\omega_1 \omega}$. Let $\varphi$ be an $\mathcal{L}_{\omega_1 \omega}$-sentence. We shall say that a property $P$ is defined by $\varphi$ in the space $\spaceofg_n$ if $$\{N \in \spaceofg_n: G_N \models \varphi\}=\{N \in \spaceofg_n: G_N \text{ is } P\}$$
where the constant symbols for elements of $\mathbb{F}_n$ are interpreted in the obvious way, i.e. $g$ is interpreted as $gN$ in the quotient group $G_N=\mathbb{F}_n/N$. A simple but important observation is that, since the underlying structures are countable and each element is named by a constant, any sentence is equivalent to a quantifier-free sentence in each $G_N$, as one can replace universal quantifiers by appropriate conjunctions and existential quantifiers by appropriate disjunctions, using the constant symbols.

Before we proceed, let us mention why restricting our attention to properties that are defined by some $\mathcal{L}_{\omega_1 \omega}$-sentence is sufficient. We are mostly interested in algebraic properties that define Borel sets in $\spaceofg_n$. We shall see later that each Borel set in $\spaceofg_n$ can be defined by some quantifier-free $\mathcal{L}_{\omega_1 \omega}$-sentence in this augmented language.

\subsection{Some group theoretic properties} In this subsection, we shall determine the exact descriptive complexity of being finite, finitely presented, abelian, nilpotent and torsion-free. The results in this section are all folklore.

\subsubsection{Being finite.} Let $N \in \spaceofg_n$ be such that $G_N$ is finite. Set $k$ to be the radius of the Cayley graph of $(G_N,S_N)$. Since the closed ball $\mathcal{B}[N,k]$ completely determines the group operation, we have that $B_{\mu}(N,2^{-(k+1)})=\{N\}$. It follows that being finite is a $\bsigma^0_1$-property (and that finite groups are isolated.)

Moreover, it is easily seen that the limit of the sequence of the finite cyclic groups $(\mathbb{Z}/k\mathbb{Z},(1))$ is $(\mathbb{Z},(1))$ and hence, being finite is not a $\bpi^0_1$-property. Therefore, the set of finite groups in $\spaceofg_n$ is $\bsigma^0_1$-complete.

\subsubsection{Being finitely presented.} Since there are countably many finitely presented marked groups, the set of finitely presented groups in $\spaceofg_n$ is clearly in $\bsigma^0_2$. On the other hand, it follows from Proposition \ref{finitelypresentedconverge} that the isolated points of $\spaceofg_n$ are finitely presented and that the set of finitely presented groups is dense in the perfect part of $\spaceofg_n$. It then follows from a Baire category argument that this set cannot be in $\bpi^0_2$. Thus the set of finitely presented groups in $\spaceofg_n$ is in $\bsigma^0_2$-complete.

\subsubsection{Being abelian.} It is clear that the property of being abelian is defined by the sentence
\[ \bigwedge_{1 \leq i,j \leq n} [\gamma_i,\gamma_j]=e\]
Since each sentence of the form $[\gamma_i,\gamma_j] = e$ defines a $\bdelta^0_1$-subset of $\spaceofg_n$, we have that the set of abelian groups in $\spaceofg_n$ is $\bdelta^0_1$.

This fact can also be observed as follows. For every $K,N \in \spaceofg_n$, if $\nu(K,N) < 2^{-3}$, then $W(N,4)=W(K,4)$, in which case the sentences of the form $[\gamma_i,\gamma_j]=e$ that are satisfied in $G_N$ and $G_K$ are the same. Thus, any $\nu$-open ball with radius $2^{-3}$ of an abelian (respectively, non-abelian) group consists of abelian (respectively, non-abelian) groups.

\subsubsection{Being nilpotent.} Note that the property of being nilpotent is defined by the sentence
\[ \bigvee_{k \in \mathbb{N}^+} \left( \bigwedge_{1 \leq i_1,\dots,i_{k+1} \leq n} [\gamma_{i_1},\gamma_{i_2},\dots,\gamma_{i_{k+1}}]=e \right) \]

Each sentence of the form $[\gamma_{i_1},\gamma_{i_2},\dots,\gamma_{i_{k+1}}]=e$ defines a $\bdelta^0_1$-subset of $\spaceofg_n$ and hence, being nilpotent of class at most $k$ is a $\bdelta^0_1$-property and being nilpotent is a $\bsigma^0_1$-property. On the other hand, since non-abelian free groups are fully residually finite-$p$ (for any prime $p$) \cite{MHall50}, by Proposition \ref{residuallypconverge}, there exists a sequence of  finite nilpotent  groups converging to a non-abelian free group. This shows that being nilpotent is not a $\bpi^0_1$-property and hence, the set of nilpotent groups in $\spaceofg_n$ is $\bsigma^0_1$-complete.

\subsubsection{Being torsion-free.} By definition, the set of torsion-free groups in $\spaceofg_n$ is defined by the sentence
\[ \bigwedge_{\delta \in \mathbb{F}_n}\left(e=\delta\ \vee \left(\bigwedge_{k \in \mathbb{N}^+} \delta^k \neq e\right)\right)\]
Since each sentence of the form $\delta^k \neq e$ defines a $\bdelta^0_1$-subset of $\spaceofg_n$, the set of torsion-free groups in $\spaceofg_n$ is in $\bpi^0_1$. However, as before, since a sequence of non torsion-free (in particular, finite) groups can converge to a non-abelian free group in $\spaceofg_n$, this set is not in $\bsigma^0_1$. Thus, the set of torsion-free groups in $\spaceofg_n$ is $\bpi^0_1$-complete.

\subsection{Being P v. being virtually P}

Having set up a formal framework for definability of properties using an infinitary logic, as an illustration of how this can be used to prove general facts regarding descriptive complexity of properties, we will now analyze the relationship between being $P$ and being virtually $P$ in this subsection. Recall that given a property $P$ of groups, a group $G$ is said to be \textit{virtually $P$} if $G$ has a finite index subgroup with the property $P$.

Before we prove the main result of this subsection, let us give a stratification of a special subclass of quantifier-free formulas of $\mathcal{L}_{\omega_1 \omega}$. This stratification will allow us to prove that any Borel set in $\spaceofg_n$ is defined by some $\mathcal{L}_{\omega_1 \omega}$ sentence, a fact promised earlier. We would like to remind the reader that we have a constant symbol for each element of $\mathbb{F}_n$, which is necessary for this fact to hold.

Let $\Sigma^{\text{qf}}_0=\Pi^{\text{qf}}_0$ be the set of quantifier-free $\mathcal{L}_{\omega_1 \omega}$-sentences that are finite Boolean combinations of atomic sentences. For every $1 \leq \alpha < \omega_1$, we inductively define $\Sigma^{\text{qf}}_{\alpha}$ (respectively, $\Pi^{\text{qf}}_{\alpha}$) to be the set of countable disjunctions (respectively, conjunctions) of sentences in $\bigcup_{0 \leq \beta < \alpha} \Pi^{\text{qf}}_{\beta}$ (respectively, $\bigcup_{0 \leq \beta < \alpha} \Sigma^{\text{qf}}_{\beta}$.) Observe that the sets of the form
\begin{align*}\{N \in \spaceofg_n: g \in N\}&=\{N \in \spaceofg_n: G_N \models\ g=e\}\\ \{N \in \spaceofg_n: g \notin N\}&=\{N \in \spaceofg_n: G_N \models\ g \neq e\}
\end{align*}
where $g$ ranges over $\mathbb{F}_n$, form a clopen subbase for the topology of $\spaceofg_n$. Thus the sets defined by finite Boolean combinations of atomic sentences form a clopen basis for the topology of $\spaceofg_n$. It follows that any set in $\bsigma^0_1$ is defined by some sentence in $\Sigma^{\text{qf}}_1$ and any set in $\bpi^0_1$ is defined by some sentence in $\Pi^{\text{qf}}_1$. It can now be proven by induction that every set in $\bsigma^0_{\alpha}$ (respectively, $\bpi^0_{\alpha}$) is defined by some quantifier-free sentence in $\Sigma^{\text{qf}}_\alpha$ (respectively, $\Pi^{\text{qf}}_\alpha$.) Conversely, another inductive argument implies that any sentence in $\Sigma^{\text{qf}}_\alpha$ (respectively, $\Pi^{\text{qf}}_\alpha$) defines a set in $\bsigma^0_{\alpha}$ (respectively, $\bpi^0_{\alpha}$.)

We would like to note that a better stratification of a broader class formulas of $\mathcal{L}_{\omega_1 \omega}$ can be given. For example, see \cite[Section 3.2]{Osin20} for such a stratification and \cite[Lemma 5.3]{Osin20} for the counterpart of the fact we proved above. That said, our somewhat primitive stratification is going to suffice for our purposes.

We have the following proposition, whose proof will be partly used later to analyze the descriptive complexity of residual finiteness.

\begin{lemma}\label{virtualplemma} Suppose that ``being $P$" is in $\bsigma^0_{\alpha}$ (respectively, $\bpi^0_{\alpha}$) for all $n \in \mathbb{N}^+$. Then, for each integer $n >1$, ``being virtually $P$" is in
\begin{itemize}
\item $\bsigma^0_3$ whenever $\alpha < 3$.
\item $\bsigma^0_{\alpha}$ (respectively, $\bsigma^0_{\alpha+1}$) whenever $\alpha \geq 3$.
\end{itemize}
\end{lemma}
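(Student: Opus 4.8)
The plan is to describe ``being virtually $P$'' as a countable union over finite-index subgroups, using the fact that $\mathbb{F}_n$ has only countably many subgroups of each finite index. First I would fix $n > 1$ and note that a group $G_N$ is virtually $P$ if and only if there is some subgroup $H \leqslant \mathbb{F}_n$ of finite index with $N \leqslant H$ such that $H/N$, viewed as a marked group on some finite generating tuple, has the property $P$. The subtlety is that $H/N$ is not literally an element of some $\spaceofg_m$ in a canonical way, so I would instead work with the following reformulation: for each finite-index subgroup $H \leqslant \mathbb{F}_n$, pick a finite generating tuple $(h_1,\dots,h_m)$ of $H$ (using a Schreier generating set, so $m$ depends only on the index and $n$), and consider the continuous ``restriction'' map $\rho_H \colon \{N \in \spaceofg_n : N \leqslant H\} \to \spaceofg_m$ sending $N$ (with $N \trianglelefteq \mathbb{F}_n$, $N \leqslant H$, hence $N \trianglelefteq H$ since $N \trianglelefteq \mathbb{F}_n$) to the kernel of $\mathbb{F}_m \to H/N$ induced by $\gamma_i \mapsto h_i N$. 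The key point is that $\rho_H$ is continuous: membership of a word $w(\gamma_1,\dots,\gamma_m)$ in $\rho_H(N)$ is determined by whether the corresponding element of $\mathbb{F}_n$ lies in $N$, which is a clopen condition. The domain $\{N : N \leqslant H\} = \bigcap_{g \in H}\{N : g \in N\}^c$-style intersection is actually $\bigcap_{g \notin H}\{N : g \notin N\}$ — wait, rather $\{N : N \subseteq H\} = \bigcap_{g \in \mathbb{F}_n \setminus H}\{N : g \notin N\}$, a closed (in fact $\bpi^0_1$) subset of $\spaceofg_n$.

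Granting this, I would write
\[\{N : G_N \text{ is virtually } P\} = \bigcup_{H} \Big(\{N : N \subseteq H\} \cap \rho_H^{-1}[\{M \in \spaceofg_{m(H)} : G_M \text{ is } P\}]\Big),\]
where $H$ ranges over the countably many finite-index subgroups of $\mathbb{F}_n$. Here I need the hypothesis that ``being $P$'' is in $\bsigma^0_\alpha$ (resp.\ $\bpi^0_\alpha$) in \emph{every} $\spaceofg_m$, $m \in \mathbb{N}^+$ — this is why the lemma is stated with the ``for all $n$'' clause. Each set $\rho_H^{-1}[\{M : G_M \text{ is } P\}]$ is then $\bsigma^0_\alpha$ (resp.\ $\bpi^0_\alpha$) by continuity of $\rho_H$, intersecting with a $\bpi^0_1$ set keeps it in $\bsigma^0_\alpha$ (resp.\ $\bpi^0_\alpha$) for $\alpha \geq 1$, and the countable union puts the whole thing in $\bsigma^0_\alpha$ if $\alpha \geq 2$ with $\bsigma$, or in $\bsigma^0_{\alpha+1}$ if we started from $\bpi^0_\alpha$. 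That already gives the $\alpha \geq 3$ bullet: from $\bsigma^0_\alpha$ we land in $\bsigma^0_\alpha$, and from $\bpi^0_\alpha$ we land in $\bsigma^0_{\alpha+1} \subseteq \bsigma^0_{\alpha+1}$, so in both cases we are in $\bsigma^0_{\alpha+1}$, and in the $\bsigma$ case in $\bsigma^0_\alpha$. For the $\alpha < 3$ bullet, the worst case is ``being $P$'' in $\bpi^0_2$: then each piece is $\bpi^0_2$ intersected with $\bpi^0_1$, still $\bpi^0_2$, and a countable union of $\bpi^0_2$ sets is $\bsigma^0_3$; if ``being $P$'' is $\bsigma^0_1$, $\bpi^0_1$, or $\bsigma^0_2$, each piece is at worst $\bsigma^0_2$ and the countable union stays $\bsigma^0_2 \subseteq \bsigma^0_3$. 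So in all cases with $\alpha < 3$ we get $\bsigma^0_3$.

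The main obstacle I anticipate is setting up $\rho_H$ cleanly and verifying that virtual-$P$-ness really is captured by this union. Two things need care: (i) one must check that every finite-index subgroup of $G_N$ arises as $H/N$ for some \emph{finite-index} $H \leqslant \mathbb{F}_n$ containing $N$ — this holds because the preimage in $\mathbb{F}_n$ of a finite-index subgroup of $G_N = \mathbb{F}_n/N$ is a finite-index subgroup of $\mathbb{F}_n$ containing $N$, and conversely; and (ii) one must make sure the chosen generating tuple $(h_1,\dots,h_{m(H)})$ of each $H$ can be fixed once and for all (e.g.\ via the Reidemeister--Schreier procedure applied to a fixed transversal), so that $\rho_H$ is a genuine, index-independent-of-$N$ continuous map into a single space $\spaceofg_{m(H)}$. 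Continuity of $\rho_H$ itself is then routine: for a basic clopen set $\{M : g' \in M\}$ or $\{M : g' \notin M\}$ in $\spaceofg_{m(H)}$ (with $g' \in \mathbb{F}_{m(H)}$), rewriting $g'$ in terms of the $h_i$ produces a fixed element $\tilde g \in \mathbb{F}_n$, and the preimage is $\{N : \tilde g \in N\}$ or its complement, which is clopen. Everything else is bookkeeping with the Borel-class closure rules reviewed in Section 2.
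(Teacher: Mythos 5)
Your argument is correct and delivers the stated bounds, but it is organized differently from the paper's proof, and the difference is worth recording. The paper indexes its countable union by arbitrary finite subsets $A_i \subseteq \mathbb{F}_n$: for each $A_i$ it pulls back the sentence defining $P$ through the substitution $\gamma_j \mapsto a_j$ (the syntactic counterpart of your map $\rho_H$, since $K_{N,A_i}$ is exactly the preimage of $N$ under $\mathbb{F}_{|A_i|}\to\mathbb{F}_n$), and then must impose \emph{separately} that $\langle A_i\rangle N/N$ has finite index in $G_N$; expressing that costs a $\bpi^0_2$ condition, and the Schreier index formula is invoked to bound the index by $|A_i|$ so that one countable union over the $A_i$ suffices. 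You instead index by the countably many finite-index subgroups $H \leqslant \mathbb{F}_n$ themselves, so finiteness of index is automatic (via the correspondence theorem, which you verify) and the only side condition is $N \subseteq H$, a closed set; Nielsen--Schreier enters only to fix a finite generating tuple for $H$. The payoff is that each piece of your union is the $P$-class intersected with a $\bpi^0_1$ set rather than a $\bpi^0_2$ set, which not only proves the lemma but sharpens its first bullet: for instance, if ``being $P$'' is $\bsigma^0_2$ in every $\spaceofg_m$, your decomposition puts ``virtually $P$'' in $\bsigma^0_2$, not merely $\bsigma^0_3$. What the paper's heavier route buys in exchange is reuse: its $\bpi^0_2$ computation of $\{N : [G_N:H_{N,A_i}]<j\}$ is recycled later in the analysis of residual finiteness. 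Two minor points to tidy in your write-up: the blanket claim that intersecting with a $\bpi^0_1$ set preserves $\bsigma^0_\alpha$ ``for $\alpha \geq 1$'' fails at $\alpha=1$ (open intersect closed need not be open), though your subsequent case analysis never uses it there; and since $\rho_H$ is defined only on the closed set $\{N: N\subseteq H\}$, you should either cite the fact that relative Borel classes on a subspace are traces of ambient ones, or simply observe that $\rho_H$ extends continuously to all of $\spaceofg_n$, because the preimage of a normal subgroup $N\trianglelefteq\mathbb{F}_n$ under the homomorphism $\mathbb{F}_{m(H)}\to\mathbb{F}_n$, $\gamma_i\mapsto h_i$, is always normal in $\mathbb{F}_{m(H)}$.
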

\begin{proof} As mentioned before, for each $n \in \mathbb{N}^+$, we can choose some quantifier-free sentence $\varphi_n$ in $\Sigma^{\text{qf}}_{\alpha}$ (respectively, $\Pi^{\text{qf}}_{\alpha}$) which defines ``being $P$" in $\spaceofg_n$. Let $n >1$ and fix an enumeration $(A_i)_{i \in \mathbb{N}}$ of non-empty finite subsets of $\mathbb{F}_n$. Fix $i \in \mathbb{N}$, set $p_i=|A_i|$ and let $A_i=\{a_1,\dots,a_{p_i}\}$.

Consider the sentence $\phi_{A_i}$ obtained from $\varphi_{p_i}$ after
\begin{itemize}
\item replacing each $\gamma_j$ by the word $a_j$ for every $1 \leq j \leq p_i$, and
\item replacing the other elements of $\mathbb{F}_{p_i}$ by words in $\langle A_i \rangle$ accordingly.
\end{itemize}
Observe that an atomic sentence is turned into another atomic sentence $\mathcal{L}_{\omega_1 \omega}$. Consequently, one can show by induction on the complexity of sentences that $\phi_{A_i}$ is in $\Sigma^{\text{qf}}_{\alpha}$ (respectively, $\Pi^{\text{qf}}_{\alpha}$) and hence the set $\{N \in \spaceofg_n: G_N \models \phi_{A_i}\}$ is also in $\bsigma^0_{\alpha}$ (respectively, $\bpi^0_{\alpha}$.)

For each $N \in \spaceofg_n$, let $H_{N,A_i}$ denote the finitely generated subgroup 
\[\langle A_i \rangle N/N=\langle a_j N: 1 \leq j \leq p_i\rangle \leqslant G_N\]
Let $N \in \spaceofg_n$. Then the marked group $(H_{N,A_i},(a_1 N,\dots,a_{p_i}N))$ is isomorphic to the marked group
$$ (G_{K_{N,A_i}},S_{K_{N,A_i}})=(\mathbb{F}_{p_i}/K_{N,A_i}, (\gamma_1 K_{N,A_i}, \gamma_2 K_{N,A_i}, \dots, \gamma_{p_i} K_{N,A_i}))$$
where $K_{N,A_i}$ is the kernel of the canonical surjection from $\mathbb{F}_{p_i}$ to $H_{N,A_i}$ extending the map $\gamma_j \mapsto a_j N$. Moreover, for every word $w$ with symbols from $\{\gamma_1^{\ \pm 1}, \dots, \gamma_{p_i}^{\ \pm 1}\}$, the word $w$ is in $\widehat{N}$ if and only if the word obtained from $w$ after replacing each $\gamma_j$ by $a_j$ is in $N$. It follows that the set
\begin{align*}
\{N \in \spaceofg_n: H_{N,A_i} \text{ is } P\}&=\{N \in \spaceofg_n: G_{K_{N,A_i}} \text{ is } P\}\\
&=\{N \in \spaceofg_n: G_{K_{N,A_i}} \models \varphi_{p_i}\}\\
&=\{N \in \spaceofg_n: G_N \models \phi_{A_i}\}
\end{align*}
is in $\bsigma^0_{\alpha}$ (respectively, $\bpi^0_{\alpha}$.) Now, we want to understand the complexity of the set of $N$'s for which $H_{N,A_i}$ is of finite index in $G_N$. Note that, for every integer $j \geq 2$, we have $[G_N:H_{N,A_i}] < j$ if and only if $[\mathbb{F}_{n}: \langle A_i \rangle N] < j$. Thus

\begin{align*}
&\ \ \ \ \{N \in \spaceofg_n: [G_N:H_{N,A_i}] < j\}\\&=\left\{N \in \spaceofg_n: [\mathbb{F}_{n}: \langle A_i \rangle N] < j\right\}\\
&=\left\{N \in \spaceofg_n: \forall g_1,\dots,g_j \in \mathbb{F}_n\ \ \exists k \neq \ell \in \{1,\dots,j\}\ \ \exists \delta_{k\ell} \in \langle A_i \rangle\ \delta^{-1}_{k\ell} g^{-1}_k g_{\ell} \in N\right\}\\
&=\bigcap_{g_1,\dots,g_j \in \mathbb{F}_n} \bigcup_{1 \leq k \neq \ell \leq j} \bigcup_{\delta_{k\ell} \in \langle A_i \rangle} \left\{N \in \spaceofg_n: \delta^{-1}_{k\ell} g^{-1}_k g_{\ell} \in N\right\}
\end{align*}
Since the sets $\left\{N \in \spaceofg_n: \delta^{-1}_{k\ell} g^{-1}_k g_{\ell} \in N\right\}$ are clopen for every $k$ and $\ell$, the set defined above is in $\bpi^0_2$.

We claim that, for every $N \in \spaceofg_n$, the group $G_N$ has a subgroup of finite index if and only if there exists $i \in \mathbb{N}$ such that $[G_N: H_{N,A_i}] \leq |A_i|$. Suppose that $L \leqslant G_N$ is of finite index. So $L = K / N$ for some $K \geqslant N$ with $[\mathbb{F}_n:K]=[G_N:L]$. Therefore $K=\langle A_i \rangle = \langle A_i \rangle N$, that is, $L=H_{N,A_i}$ for some $i \in \mathbb{N}$. By Schreier index formula
\[ |A_i| \geq \text{rank}(K)=[\mathbb{F}_n:K] (n-1)+1 \geq [\mathbb{F}_n:K]=[G_N:L]\]
It follows that
\begin{align*}
&\{N \in \spaceofg_n: G_N \text{ is virtually } P\}=\\
&\bigcup_{i=0}^{\infty}\ \{N \in \spaceofg_n: H_{N,A_i} \text{ is } P\ \text{ and }\ [G_N: H_{N,A_i}] \leq |A_i|\}=\\
&\bigcup_{i=0}^{\infty}\ \{N \in \spaceofg_n: H_{N,A_i} \text{ is } P\} \cap \{N \in \spaceofg_n: [G_N: H_{N,A_i}] \leq |A_i|\}
\end{align*}
which can easily be checked to be in $\bsigma^0_3$ whenever $\alpha < 3$, and in $\bsigma^0_{\alpha}$ (respectively, $\bsigma^0_{\alpha+1}$) whenever $\alpha \geq 3$.\end{proof}
 
\subsection{Solvability, periodicity and some growth properties}

In this subsection, we shall determine the exact descriptive complexity of solvability, periodicity and having various growth properties. For a general background regarding the notion of the growth of a finitely generated group, we refer the reader to \cite{Mann12}.

For later use, we shall first observe that the set of groups with polynomial growth in $\spaceofg_n$ is in $\bsigma^0_1$ (and indeed is $\bsigma^0_1$-complete.) By Gromov's theorem \cite{Gromov81}, the class of groups with polynomial growth coincides with the class of finitely generated virtually nilpotent groups. On the other hand, finitely generated virtually nilpotent groups are finitely presented and hence, by Proposition \ref{finitelypresentedquotient}, every virtually nilpotent group in $\spaceofg_n$ has a neighborhood which entirely consists of its quotients, which are also virtually nilpotent. Thus, the set of virtually nilpotent (and hence, polynomial growth) groups in $\spaceofg_n$ is in $\bsigma^0_1$. On the other hand, this set is not in $\bpi^0_1$ since, as before, there exists a sequence of polynomial growth (in fact, finite) groups whose limit is an exponential growth (in fact, non-abelian free) group. Thus, the set of groups with polynomial growth in $\spaceofg_n$ is $\bsigma^0_1$-complete.

We are now ready to state our main theorem.

\begin{theorem} Let $n > 1$ be an integer. Then
\begin{itemize}
\item[a.] the set of solvable groups in $\spaceofg_n$ is $\bsigma^0_2$-complete.
\item[b.] the set of periodic groups in $\spaceofg_n$ is $\bpi^0_2$-complete. 
\item[c.] the set of groups with exponential growth in $\spaceofg_n$ is $\bsigma^0_2$-complete. 
\item[d.] the set of groups with intermediate growth in $\spaceofg_n$ is $\bpi^0_2$-complete. 
\item[e.] the set of groups with decidable word problem in $\spaceofg_n$ is $\bsigma^0_2$-complete.
\end{itemize} 
\end{theorem}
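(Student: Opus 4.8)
The plan is to establish, for each item, an upper bound by a soft argument using the stratification of $\mathcal{L}_{\omega_1\omega}$-sentences set up above (or a counting argument), together with a matching lower bound obtained from a single continuous parametrization by Grigorchuk groups. For the upper bounds: a group $G_N$ is solvable iff $\mathbb{F}_n^{(k)}\subseteq N$ for some $k$, so solvability is defined by the $\Sigma^{\text{qf}}_2$-sentence $\bigvee_k\bigwedge_{w\in\mathbb{F}_n^{(k)}}(w=e)$ and its set lies in $\bsigma^0_2$; periodicity is defined by the $\Pi^{\text{qf}}_2$-sentence $\bigwedge_{\delta\in\mathbb{F}_n}\bigvee_{k\geq1}(\delta^k=e)$, so its set lies in $\bpi^0_2$; since the growth function $\beta_N(k)=|\mathcal{B}[N,k]|$ is submultiplicative, $G_N$ has exponential growth iff there is a rational $c>1$ with $\beta_N(k)\geq c^k$ for all $k$, and as each set $\{N:\beta_N(k)\geq c^k\}$ is clopen, the set of exponential-growth groups lies in $\bsigma^0_2$; the set of intermediate-growth groups is then the complement of the union of the (open) set of polynomial-growth groups and the $\bsigma^0_2$ set of exponential-growth groups, hence lies in $\bpi^0_2$; and since $\mathbb{F}_n$ has only countably many recursive subsets, the set of $N$ for which $G_N$ has decidable word problem is countable, hence lies in $\bsigma^0_2$.

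For the lower bounds I will use the Grigorchuk family, which gives a continuous map $\Phi\colon\omega\mapsto G_\omega$ from $\{0,1,2\}^{\mathbb{N}}$ into $\spaceofg_n$ for every $n>1$ — for $n\geq 3$ because the $G_\omega$ are uniformly $3$-generated and $\spaceofg_3$ embeds into $\spaceofg_n$, and for $n=2$ by passing to a genuinely $2$-generated variant of the construction. I will invoke from \cite{Grigorchuk84}: (i) $\omega$ is eventually constant $\iff G_\omega$ is infinite and virtually abelian; (ii) $\omega$ is not eventually constant $\iff G_\omega$ is an infinite torsion group; (iii) $G_\omega$ has intermediate growth $\iff$ each of $0,1,2$ occurs infinitely often in $\omega$; (iv) $G_\omega$ has decidable word problem $\iff\omega$ is recursive. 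Their elementary consequences: an eventually constant $\omega$ yields a $G_\omega$ that is solvable, of polynomial growth, non-periodic, and with decidable word problem; a non-eventually-constant $\omega$ yields a finitely generated infinite torsion group, hence one that is neither solvable nor of polynomial growth, and which by (iii) has exponential growth exactly when some symbol occurs only finitely often in $\omega$.

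Now: $\Phi$ witnesses that the $\bsigma^0_2$-complete set $E$ of eventually constant sequences in $3^{\mathbb{N}}$ Wadge reduces to the set of solvable groups, since $\Phi(\omega)$ is solvable iff $\omega\in E$ by (i) and (ii) — this gives (a); the same $\Phi$ shows the $\bpi^0_2$-complete set $3^{\mathbb{N}}\setminus E$ Wadge reduces to the set of periodic groups — giving (b); it shows the $\bpi^0_2$-complete set $I$ Wadge reduces to the set of intermediate-growth groups by (iii) — giving (d); and it shows the $\bsigma^0_2$-complete set of recursive sequences Wadge reduces to the set of decidable-word-problem groups by (iv) — giving (e). For (c), compose $\Phi$ with the continuous map $2^{\mathbb{N}}\to 3^{\mathbb{N}}$ sending $\alpha$ to the sequence having, at position $2i$, the value $2$ if $\alpha_i=1$ and $0$ otherwise, and at position $2i+1$ the value $i\bmod 2$; the resulting Grigorchuk parameter is never eventually constant and contains all three symbols infinitely often iff $\alpha$ has infinitely many $1$'s, so by (iii) and (ii) the resulting group has exponential growth iff $\alpha$ is eventually $0$, and the set of such $\alpha$ is $\bsigma^0_2$-complete, being countable and dense. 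In each of the five cases the relevant subset of $\spaceofg_n$ already lies in the asserted class by the first paragraph, so completeness follows.

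The real content lies on the group-theoretic side: establishing the dictionary (i)–(iv), in particular Grigorchuk's torsion theorem, the full growth dichotomy for the family (notably that a single rarely-occurring symbol forces exponential growth), and the equivalence between recursiveness of $\omega$ and decidability of the word problem of $G_\omega$; and, secondarily, checking that the whole parametrization can be realized inside $\spaceofg_n$ for \emph{every} $n>1$, the case $n=2$ requiring a $2$-generated version of the construction. The descriptive-set-theoretic bookkeeping — the upper bounds and the identification of the auxiliary complete sets — is routine given the tools already developed.
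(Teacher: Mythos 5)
Your upper bounds are correct and essentially identical to the paper's (solvability as a countable union of the closed sets ``$\mathbb{F}_n^{(k)}\subseteq N$'', periodicity as a $\Pi^{\text{qf}}_2$-sentence, exponential growth as a union over rational $a>1$ of intersections of clopen conditions, intermediate growth by complementation, countability for decidable word problem). The lower-bound strategy is also the paper's (a continuous Grigorchuk parametrization $f\colon 3^{\mathbb{N}}\to\spaceofg_n$ plus Wadge reductions from $E$, its complement, $I$ and $C$), but the dictionary (i)--(iv) you invoke is wrong in a way that breaks two of your five reductions. In Grigorchuk's family the roles of $E$ and $I$ are the opposite of what you state: $G_\omega$ is periodic if and only if $\omega\in I$ (all three symbols occur infinitely often), \emph{not} if and only if $\omega\notin E$ --- for $\omega=(01)^\infty$ the group has intermediate growth but is not torsion; and the growth dichotomy is governed by $E$, not $I$: \emph{every} $\omega\notin E$ gives intermediate growth, even when some symbol occurs only finitely often, so there is no ``rarely occurring symbol forces exponential growth'' phenomenon. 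Moreover your (i) is incompatible with continuity of $\Phi$: to make $\omega\mapsto (G_\omega,S_\omega)$ continuous on all of $3^{\mathbb{N}}$ one must replace the degenerate groups at $\omega\in E$ by their limits along $3^{\mathbb{N}}-E$, and these limit groups are virtually metabelian of \emph{exponential} growth (lamplighter-by-$\mathbb{Z}_2$ for constant $\omega$, by Benli--Grigorchuk), not virtually abelian of polynomial growth.

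Concretely: for (b), $f^{-1}[\{\text{periodic}\}]=I$, not $3^{\mathbb{N}}\setminus E$, so your map is not a reduction of $3^{\mathbb{N}}\setminus E$ (the fix is to reduce $I$, which is $\bpi^0_2$-complete, as the paper does); for (c), your composed map $2^{\mathbb{N}}\to 3^{\mathbb{N}}\to\spaceofg_n$ produces only non-eventually-constant parameters, hence only intermediate-growth groups, so the preimage of the exponential-growth set is empty rather than the set of eventually-zero sequences --- the reduction fails outright, and the correct (and simpler) reduction is $E$ itself via $f$, since $f(\omega)$ has exponential growth exactly when $\omega\in E$; for (a), your proof that $\omega\notin E$ gives a non-solvable group rests on the false torsion claim and breaks for $\omega\notin E\cup I$ --- non-solvability should instead come from intermediate growth (Milnor--Wolf), or from the $2$-generated family where solvability is equivalent to $\omega\in E$; for (d), your map actually reduces $3^{\mathbb{N}}-E$ rather than $I$, so the conclusion survives but the stated equivalence ``intermediate growth iff $\omega\in I$'' is false. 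Only (e) is justified as written. With the corrected dictionary ($E$ reduces to (a) and (c), $3^{\mathbb{N}}-E$ to (d), $I$ to (b), $C$ to (e)) the argument becomes exactly the paper's proof.
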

\begin{proof} Observe that the property of being solvable is defined by the sentence
\[ \bigvee_{k \in \mathbb{N}^+} \left(\bigwedge_{\delta \in \mathbb{F}_n^{(k)}} \delta = e\right) \] 
Each sentence $\delta = e$ defines a $\bdelta^0_1$-subset of $\spaceofg_n$ and hence, being solvable of degree at most $k$ is a $\bpi^0_1$-property and being solvable is a $\bsigma^0_2$-property.

By definition, the property of being periodic is defined by the sentence
\[ \bigwedge_{\delta \in \mathbb{F}_n} \left(\bigvee_{k \in \mathbb{N}^+} \delta^k=e\right)\]
Since each sentence $\delta^k=e$ defines a $\bdelta^0_1$-subset of $\spaceofg_n$, being periodic is a $\bpi^0_2$-property.

A marked group $(G_N,S_N)$ has exponential growth if there exist a real (equivalently, rational) number $a>1$ with
\[  \Gamma_N(x) \geq a^x \]
for every $x \in \mathbb{N}^+$, where the growth function $\Gamma_N(x)$ is the number of vertices in $\mathcal{B}[N,x]$. In other words, the set of groups in $\spaceofg_n$ with exponential growth is
\[ \bigcup_{a \in \mathbb{Q}_{>1}}\ \bigcap_{x \in \mathbb{N}^+} \left\{N \in \spaceofg_n: \Gamma_N(x) \geq a^x \right\}\]
Observe that, for every fixed rational $a>1$ and fixed $x \in \mathbb{N}^+$, the inner-most set defines a $\bdelta^0_1$-set because $\Gamma_N(x)=\Gamma_K(x)$ whenever $\mu(N,K) \leq 2^{-x}$. Hence, having exponential growth is a $\bsigma^0_2$-property. (We would like to note that, the argument above indeed shows that, given two functions $f,g: \mathbb{N}^+ \rightarrow \mathbb{R}$, the set of groups in $\spaceofg_n$ with growth function $\Gamma_N(x)$ satisfying $f(x) \leq \Gamma_N(x) \leq g(x)$ for all $x \in \mathbb{N}^+$ is in $\bsigma^0_2$.)

By definition, the set of groups with intermediate growth in $\spaceofg_n$ is the intersection of the complements of the sets of exponential growth groups and polynomial growth groups in $\spaceofg_n$, which are in $\bpi^0_2$ and $\bpi^0_1$ respectively. It follows that this set is in $\bpi^0_2$. The set of groups with decidable word problem is countable and hence is in $\bsigma^0_2$.

We shall next show that the sets of solvable groups, periodic groups, groups with exponential growth, groups with intermediate growth and groups with decidable word problem are complete in their respective point classes.

Let $E$ and $C$ denote the set of eventually constant sequences and the set of recursive sequences in $3^{\mathbb{N}}$ respectively. Let $I$ denote the set of sequences in $3^{\mathbb{N}}$ which contain infinitely many 0's, 1's and 2's. We have observed in Section 2.1 that $E$ and $C$ are $\bsigma^0_2$-complete and $I$ is $\bpi^0_2$-complete. It follows from Grigorchuk's construction \cite{Grigorchuk84} and its slight modification in \cite{BenliGrigorchuk14} that, for each $n\ge 2$, there is a continuous function $f:3^{\mathbb{N}} \to \m{G}_n$ such that
\begin{itemize}
\item $f[E]$ consists of solvable groups of exponential growth,
\item $f[3^{\mathbb{N}}-E]$ consists of intermediate growth (and hence, non-solvable) groups,
\item $f(\om)$ is periodic if and only if $\om \in I$,
\item $f(\alpha)$ has decidable word problem if and only if $\alpha \in C$.
\end{itemize}
See Appendix A for the details of this construction. In other words, $E$ is Wadge reducible to both the set of solvable groups and the set of groups with exponential growth, $C$ is Wadge reducible to the set of groups with decidable word problem, $3^{\mathbb{N}}-E$ is Wadge reducible to the set of groups with intermediate growth and $I$ is Wadge reducible to the set of periodic groups in $\spaceofg_n$. This completes the proof.\end{proof}

\subsection{More group theoretic properties} In this subsection, we shall provide some upper and lower bounds for the descriptive complexities of the properties of being simple, amenable, residually finite, Hopfian and co-Hopfian.

\subsubsection{Being simple} Recall that a group is simple if and only if the normal closure of any non-identity element is the whole group. In other words, the property of being simple is defined by the sentence
\[ \bigwedge_{\gamma \in \mathbb{F}_n} \left(e=\gamma\ \vee  \bigwedge_{\delta \in \mathbb{F}_n} \bigvee_{k \in \mathbb{N}^+} \left(\bigvee_{\delta_1,\dots,\delta_k \in \mathbb{F}_n} \bigvee_{\epsilon_1,\dots,\epsilon_k \in \{-1,1\}}  \delta = \left(\gamma^{\delta_1}\right)^{\epsilon_1} \dots \left(\gamma^{\delta_k}\right)^{\epsilon_k} \right)\right) \]
Observe that each sentence $\delta = \left(\gamma^{\delta_1}\right)^{\epsilon_1} \dots \left(\gamma^{\delta_k}\right)^{\epsilon_k}$ defines a $\bdelta^0_1$-subset of $\spaceofg_n$ and hence, the set of simple groups in $\spaceofg_n$ is in $\bpi^0_2$.  We conjecture that this set is indeed $\bpi^0_2$-complete. While we were not able to prove this, one can easily show that this set is not in $\bsigma^0_1$ or $\bpi^0_1$ as follows.

Since the sequence of finite simple groups $(\mathbb{Z}_{p_k},(1))$, where $p_k$ is the $k$-th prime, converges to the group $(\mathbb{Z},(1))$, the set of simple groups in $\spaceofg_n$ is not in $\bpi^0_1$. To show that this set is not in $\bsigma^0_1$, let $G=\langle \gamma_1,\dots,\gamma_n |\ w_i,\ i \in \mathbb{N} \rangle$ be a simple group which is not finitely presented. (For example, in \cite{Cam53}, the author constructs continuum many $2$-generated infinite simple groups, which implies via a counting argument that there exist $2$-generated infinite simple groups that are not finitely presented.) For each $k \in \mathbb{N}$, set $$G_k = \langle \gamma_1,\dots,\gamma_n |\ w_i,\ 0 \leq i \leq k \rangle$$ The relations of $G_k$ are also relations of $G$ and consequently, there exist surjective homomorphisms $\varphi_k: G_k \rightarrow G$. Since $G$ is not finitely presented, the maps $\varphi_k$ cannot be isomorphisms and hence, are not injections. It follows that $G_k$ is not simple for any $k \in \mathbb{N}$. On the other hand, as in the proof of Proposition \ref{finitelypresentedconverge}, the groups $G_k$ converge to $G$ in $\spaceofg_n$ which implies that the set of simple groups in $\spaceofg_n$ is not in $\bsigma^0_1$.

\subsubsection{Being amenable.} Recall that, by F\o lner's condition, a countable group $G$ is amenable if and only if for every finite subset $\mathbf{K} \subseteq G$ and every positive integer $m$, there exists a non-empty finite subset $\mathbf{F} \subseteq G$ such that 
\[ \displaystyle \frac{| g\mathbf{F} \Delta \mathbf{F}| }{|\mathbf{F}|} \leq \frac{1}{m}\]
for every $g \in \mathbf{K}$. Translating this condition to our setting, we have that the set of amenable groups in $\spaceofg_n$ is
\[ \bigcap_{K \in \mathcal{F}} \bigcap_{m \in \mathbb{N}^+} \bigcup_{F \in \mathcal{F}} \bigcap_{\gamma \in K} \left\{N \in \spaceofg_n: \frac{|(\gamma N \cdot FN) \Delta FN|}{|FN|} \leq \frac{1}{m} \right\}\]
where $FN=\{\alpha N : \alpha \in F\}$ and $\mathcal{F}$ denotes the set of finite subsets of $\mathbb{F}_n$. Observe that, for every fixed values of the parameters $K,m,F$ and $\gamma$, the inner-most set is open. To see this, fix these parameters and choose $k \geq 1$ big enough so that $$F^{-1}F,\ F^{-1} \gamma F \subseteq \{\mathbf{g}_i: 1 \leq i \leq k\}$$ If $N_1, N_2 \in \spaceofg_n$ and $d(N_1,N_2)<2^{-k}$, then we have that $(F^{-1} F) \cap N_1 = (F^{-1} F) \cap N_2$ and $(F^{-1} \gamma F) \cap N_1 = (F^{-1} \gamma F) \cap N_2$.

The first equality implies that the map $\alpha N_1 \mapsto \alpha N_2$ from $F N_1$ to $F N_2$ and the map $\gamma \alpha N_1 \mapsto \gamma \alpha N_2$ from $\gamma F N_1$ to $\gamma F N_2$ are well-defined and bijective. The second equality implies that these maps agree on $F N_1 \cap \gamma F N_1$. It follows that $|F N_1|= |F N_2|$ and $|\gamma F N_1 \Delta F N_1|=|\gamma F N_2 \Delta F N_2|$ whenever $d(N_1,N_2)<2^{-k}$. So, for any sufficiently close $N_1, N_2 \in \spaceofg_n$, the corresponding inequalities for $N_1$ and $N_2$ will be both true or both false, which implies that the inner-most set is open.

It then follows that the set of amenable groups in $\spaceofg_n$ is in $\bpi^0_2$. We currently do not know whether or not this set is $\bpi^0_2$-complete. We strongly suspect that this is the case. However, it is clear that this set is not in $\bsigma^0_1$  since finite groups can converge to a non-abelian free group. Also, it is not  in $\bpi^0_1$, since non-amenable groups can converge to a solvable group such as $\mathbb{Z}_2 \wr \mathbb{Z}$. More generally, it was proven in \cite{Bs1980} that every infinitely presented metabelian group enjoys this property.
 (Also, see \cite{Bgh13} for more  examples of amenable groups as limits of non-amenable groups.)

\subsubsection{Being residually finite} Recall that a finitely generated group $G$ is residually finite if and only if for every non-identity $g \in G$ there exists a finite index (and hence, finitely generated) subgroup $H$ of $G$ such that $g \notin H$. Thus, retaining the notation in the proof of Lemma \ref{virtualplemma}, one can write the set of residually finite groups in $\spaceofg_n$ as
\[\bigcap_{\gamma \in \mathbb{F}_n} \left(\{N \in \spaceofg_n:\ \gamma \in N\} \cup \bigcup_{i=0}^{\infty}\ \{N \in \spaceofg_n:\ \gamma N \notin H_{N,A_i}\ \wedge\ [G_N:H_{N,A_i}]<\infty\}\right)\]
The condition that $\gamma N \notin H_{N,A_i}$ can be expressed as
\[ \forall \delta \in \langle A_i \rangle\ \ \gamma^{-1}\delta \notin N\]
Therefore, the set of residually finite groups in $\spaceofg_n$ is
\[\bigcap_{\gamma \in \mathbb{F}_n}\ \left(\{N \in \spaceofg_n:\ \gamma \in N\} \cup \bigcup_{i=0}^{\infty}\ \bigcup_{j=2}^\infty\ \bigcap_{\delta \in \langle A_i\rangle}\ \{N \in \spaceofg_n:\ \gamma^{-1}\delta \notin N\ \wedge\ [G_N:H_{N,A_i}]< j\}\right)\]
It can be seen from the proof of Lemma \ref{virtualplemma} that, for every fixed value of the parameters, the innermost set on the right is in $\bpi^0_2$ and hence the set of residually finite groups in $\spaceofg_n$ is in $\bpi^0_4$. Given the inelegancy of this result, the authors hope that this bound can be improved.

Let $\operatorname{Sym}_f(\mathbb{Z})$  be the group of finitely supported permutations of $\mathbb{Z}$. Consider the group $G=\operatorname{Sym}_f(\mathbb{Z}) \rtimes \mathbb{Z}$ where $\mathbb{Z}$ acts on
 $\operatorname{Sym}_f(\mathbb{Z})$ by the left shift. The group $G$ is $2$-generated and is not residually finite since it contains an infinite simple subgroup. Given any finite subset $E\subseteq G$, one can choose a large enough $k\ge 1$ so that there is a local embedding on $G\to S_k \rtimes \mathbb{Z}_k$ on $E$. It follows by Proposition \ref{lep} that $G$ is a limit of finite groups. Thus, the set of residually finite groups in $\spaceofg_n$ is not in $\bpi^0_1$.
 
 On the other hand, it is proven in \cite{Stalder06} that the Baumslag-Solitar groups $B(m,n)$ converges to $\mathbb{F}_2$ in $\mathcal{G}_2$ as $|m|,|n| \rightarrow \infty$. Since the groups $B(m,n)$ are not residually finite for distinct $m,n > 1$, the set of residually finite groups in $\spaceofg_n$ is not in $\bsigma^0_1$. We do not know whether or not this set is in $\bsigma^0_2$ or $\bpi^0_2$.
 
\subsubsection{Being Hopfian and co-Hopfian} Recall that a group is Hopfian (respectively, co-Hopfian) if every epimorphism (respectively, monomorphism) from this group to itself is an isomorphism.

Let $F \subseteq \mathbb{F}_n$ be a subset of size $n$, say, $F=\{w_1,\dots,w_n\}$. For each $g \in \mathbb{F}_n$, let $g_* \in \mathbb{F}_n$ denote the image of the canonical homomorphism from $\mathbb{F}_n$ to $\mathbb{F}_n$ sending each $\gamma_i$ to $w_i$. Then the map from $G_N$ to $G_N$ given by $gN \mapsto g_* N$ is well-defined whenever $h^{-1} g \in N$ implies $h^{-1}_* g_* \in N$. It is clear that $gN \mapsto g_* N$ is a homomorphism whenever it is well-defined. Conversely, for each homomorphism $f: G_N \rightarrow G_N$, we can choose some finite subset $F=\{w_1,\dots,w_n\} \subseteq \mathbb{F}_n$ of size $n$ (if necessary, by multiplying some $w_i$'s by elements from $N$) with the property that $f(\gamma_i N)=w_i N$ such that $h^{-1} g \in N$ implies $h^{-1}_* g_* \in N$.

Now fix a subset $F \subseteq \mathbb{F}_n$ of size $n$. Clearly the set of $N$'s in $\spaceofg_n$ for which the map given by $gN \rightarrow g_*N$ is
\begin{itemize}
\item well-defined is defined by the sentence $\bigwedge_{g,h \in \mathbb{F}_n}\left(g=h \rightarrow g_*=h_*\right)$
\item surjective is defined by the sentence $\bigwedge_{h \in \mathbb{F}_n}\bigvee_{g \in \mathbb{F}_n} g_*=h$
\item injective is defined by the sentence $\bigwedge_{g,h \in \mathbb{F}_n} \left(g_*=h_* \rightarrow g=h\right)$
\end{itemize}
It follows that the set of Hopfian groups in $\spaceofg_n$ is defined by the sentence
\[ \bigwedge_{\substack{F \subseteq \mathbb{F}_n \\ F \text{ has size }n}} \left(\bigwedge_{g,h \in \mathbb{F}_n}\left(g=h \rightarrow g_*=h_*\right)\ \wedge\ \bigwedge_{h \in \mathbb{F}_n}\bigvee_{g \in \mathbb{F}_n} g_*=h\right)\rightarrow \bigwedge_{g,h \in \mathbb{F}_n} \left(g_*=h_* \rightarrow g=h\right)\]
Similarly, the set of co-Hopfian groups in $\spaceofg_n$ is defined by the sentence
\[ \bigwedge_{\substack{F \subseteq \mathbb{F}_n \\ F \text{ has size }n}} \left(\bigwedge_{g,h \in \mathbb{F}_n}\left(g=h \longleftrightarrow g_*=h_*\right)\right) \rightarrow \left(\bigwedge_{h \in \mathbb{F}_n}\bigvee_{g \in \mathbb{F}_n} g_*=h\right)\]
Since each sentence of the form $g=h$ and $g_*=h_*$ defines a $\bdelta^0_1$-set, it is easily seen that the sets of Hopfian and co-Hopfian groups in $\spaceofg_n$ are in $\bpi^0_3$ and $\bpi^0_2$ respectively.

Since finite groups, which are co-Hopfian, can converge to a non-abelian free group, which is not co-Hopfian, the set of co-Hopfian groups in $\spaceofg_n$ is not in $\bpi^0_1$. As mentioned before, the Baumslag-Solitar groups $B(m,n)$, which are not Hopfian, can converge to $\mathbb{F}_2$, which is Hopfian, in $\spaceofg_2$ and hence the set of Hopfian groups in $\spaceofg_n$ is not in $\bsigma^0_1$. We do not know whether or not the set of Hopfian groups (respectively, co-Hopfian groups) is in $\bpi^0_1$ (respectively, in $\bsigma^0_1$.)

\section{Some remarks and open questions}

While we determined the descriptive complexity of some fundamental group theoretic properties, one can further this study by analyzing more technical properties. For example, a non-trivial result of \cite{Shalom00} shows that Kazhdan's property (T) is a $\bsigma^0_1$-property. Since finite groups have the property (T) and free groups do not, this property is clearly not $\bpi^0_1$ and hence, the set of groups in $\spaceofg_n$ with Kazhdan's property (T) is $\bsigma^0_1$-complete.

Another direction in which this study can be taken is the analysis of how the complexity of being $P$ changes if one considers being \textit{just-non} $P$ or residually $P$. Recall that, given a property $P$ of groups, a group is said to be just-non $P$ if it is not $P$ and all its proper quotients are $P$.

Let us remind that a subset of a Polish space $X$ is called analytic if it is the projection of some Borel subset of $X \times Y$ for some Polish space $Y$; and called coanalytic if its complement is analytic. Given a property $P$ which defines a Borel set in $\spaceofg_n$, a quick examination shows that, in general, the properties of being residually $P$ and just-non $P$ respectively define analytic and coanalytic sets. Though, these sets need not be non-Borel. For example, as we have observed earlier, being residually finite is a Borel property. We currently know of no Borel properties $P$ for which being residually $P$ or just-non $P$ define non-Borel sets. However, such a result would seem to be important as it would show that there is no equivalent possible ``Borel definition". (An interesting result that is along the same lines is the result of Wesolek and Williams \cite{WW2017} which shows that, while amenability is a Borel property, the set of elementary amenable groups in $\spaceofg_n$ is coanalytic non-Borel.) We would like to conclude this paper by listing some open questions that we think are of importance.

\textbf{Question.} What are the descriptive complexities of the sets of simple groups, amenable groups, residually finite groups, Hopfian groups and co-Hopfian groups in $\spaceofg_n$? In particular, are the sets of simple groups, amenable groups and co-Hopfian groups $\bpi^0_2$-complete?

\textbf{Question.} Are there group theoretic Borel properties $P$ for which being just-non $P$ or being residually $P$ define non-Borel sets in $\spaceofg_n$?

\appendix
\section{Grigorchuk groups}

In this section we will recall the construction of Grigorchuk \cite{Grigorchuk84}  and its slight modification from \cite{BenliGrigorchuk14}.
Although the original definition is in terms of measure preserving transformations of the unit interval, we will give here a definition in terms of automorphisms of the binary rooted tree.

Let $T_2$ denote the binary rooted tree, where we identify each vertex by a finite binary sequence in  $\{0,1\}^\ast$ (The root of $T_2$ is identified with the empty sequence). $Aut(T_2)$ denotes the automorphism group of $T_2$. Given a vertex $v$ of $T_2$ and an automorphism $f\in Aut(T_2)$, the \textit{section} of $f$ at $v$ is the automorphism $f_v$ defined uniquely by the equation:
$$f(vu)=f(u)f_v(u)\;\;\text{for any vertex}\;\; u.$$
Also, the \textit{root permutation} of $f$ is the permutation $\sigma_f\in S_2$ defined by $\sigma_f(x)=y \iff f(x)=y$.
One observes that the map $\Phi : Aut(T_2) \to Aut(T_2)\wr S_2$ defined by  $\Phi(f)=(f_0,f_1;\sigma_f)$ is an isomorphism.

Let $\tau : 3^\mathbb{N} \to  3^\mathbb{N}$
be the shift i.e., $\tau(\om_0\om_1\om_2\ldots )=\om_1\om_2\ldots$.
For each 
$\om=\om_0\om_1 \ldots \in  3^\mathbb{N}$ we will define a subgroup $G_\om$
of $Aut(\mathcal{T}_2)$.
Each group $G_\om$ is the subgroup generated by the four automorphisms denoted by
$a,b_\om,c_\om,d_\om$
whose actions onto the tree is given (recursively) by the following:

\smallskip

$$a(0v)=1v \;\text{and} \; a(1v)=0v  \;\;\text{for any}\;\; v\in \{0,1\}^*$$

$$\begin{array}{llllll}
 b_\om(0v)=& 0 \beta(\om_0)(v) &   c_\om(0v)=& 0 \zeta(\om_0)(v)
  &  d_\om(0v)= &0 \delta(\om_0)(v) \\

   b_\om(1v)=& 1 b_{\tau \om}(v) &      c_\om(1v)= & 1 c_{\tau \om}(v)
   & d_\om(1v)= & 1 d_{\tau \om}(v), \\

\end{array}
$$
where
$$
\begin{array}{ccc}
 \beta(0)=a & \beta(1)=a & \beta(2)=e \\
  \zeta(0)=a & \zeta(1)=e & \zeta(2)=a \\
   \delta(0)=e & \delta(1)=a & \delta(2)=a \\
\end{array}
$$
and $e$ denotes the identity automorphism.  In the language of sections, this can be written shortly as $(b_\alpha)_0=\beta(\alpha_0)$ and $(b_\alpha)_1=b_{\tau \alpha}$ etc. Note that the isomorphism 
$\Phi$ restricts to  an embedding of $G_\alpha$ into $G_{\tau \alpha}\wr S_2$.

Denoting by $S_\om=(a,b_\om,c_\om,d_\om)$, we obtain a subset $\{(G_\om,S_\om) \mid \om \in 3^\mathbb{N}\}\subseteq \m{G}_4$.
Let $E\subseteq 3^\mathbb{N}$ denote the set of eventually constant sequences. In \cite{Grigorchuk84}, it was observed if two 
sequences $\alpha,\beta \in 3^\mathbb{N}- E$ have a common prefix of length $n\ge 1$, then the marked groups $(G_\alpha,S_\alpha),
(G_\beta,S_\beta)$ are within $2^{n-1}$ in $\m{G}_4$. Thus, replacing the marked groups $(G_\alpha,S_\alpha),\alpha \in E$ by the appropriate limits (denoted again by $(G_\alpha,S_\alpha)$) in 
$\m{G}_4$, one obtains
a closed subset $\m{R}=\{(G_\om,S_\om)\mid \om \in 3^\mathbb{N}\}\subset \m{G}_4 $. Also, this new family also has the property that 
$G_\alpha$ embeds into $G_{\tau \alpha}\wr S_2$ by a map  analogous  to $\Phi$.

Let $C\subseteq
3^\mathbb{N}$ be  the set of recursive sequences and   
$I\subseteq 3^\mathbb{N}$  denote the set of sequences  which contain infinitely many 0's, 1's and 2's. Then we have the following.
 
 \begin{theorem}\cite{Grigorchuk84} \mbox{}
 \begin{itemize}
  \item[1)] $\m{R}\subseteq \m{G}_4$ is homeomorphic to $3^\mathbb{N}$ via the map $\om \mapsto (G_\om,S_\om)$,
  \item[2)] $G_\om$ is a periodic group if and only if $\om \in I$.
  \item[3)] $G_\om$ has exponential  growth for $\om \in E$ and intermediate growth for $\om \notin E$.
  \item[4)] If  $\om \in E$, then $G_\om$ is virtually metabelian.
  \item[5)] $G_{\alpha}$ has decidable word problem if and only if $\alpha \in C$.

 \end{itemize}
 \end{theorem}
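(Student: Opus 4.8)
\emph{Approach.} The plan is to derive each of the five assertions from the self-similar structure of the family --- the embeddings $\Phi\colon G_\om\hookrightarrow G_{\tau\om}\wr S_2$ --- together with Grigorchuk's length-reduction technique, following \cite{Grigorchuk84}, the behaviour over the eventually constant locus $E$ being supplied by the modification of \cite{BenliGrigorchuk14}. The combinatorial input used throughout is that a reduced word $w$ in $a,b_\om,c_\om,d_\om$ lies in the first-level stabilizer exactly when it contains an even number of occurrences of $a$ --- a condition independent of $\om$, since $b_\om,c_\om,d_\om$ fix the first level --- and that for such $w$ the coordinates $w_0,w_1$ of $\Phi(w)$ are words in $a,b_{\tau\om},c_{\tau\om},d_{\tau\om}$ of length at most $\tfrac{1}{2}|w|+1$, obtained by substituting $\beta(\om_0),\zeta(\om_0),\delta(\om_0)$ for $b,c,d$ and regrouping. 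For item (1) I would first prove the continuity estimate recorded in the text: by induction on the radius, using the length bound above together with the observation that for $\om\notin E$ none of $b_{\tau^j\om},c_{\tau^j\om},d_{\tau^j\om}$ is trivial (these vanish only for $\om$ equal to $2^\infty$, $1^\infty$, $0^\infty$ respectively), one shows that a bounded-radius Cayley ball of $(G_\om,S_\om)$ depends only on finitely many coordinates of $\om$. Hence $\om\mapsto(G_\om,S_\om)$ is uniformly continuous on the dense set $3^\bN-E$ and extends continuously to $3^\bN$; by construction $\m R$ is the image of this extension, the points indexed by $E$ carrying these limits rather than the degenerate literal groups. Injectivity amounts to checking that distinct $\om$ give distinct normal subgroups of $\mathbb F_4$: for $\om,\om'\notin E$ this is part of Grigorchuk's analysis (the torsion groups are even pairwise non-isomorphic as abstract groups), and in general, at the first coordinate where $\om$ and $\om'$ differ one exhibits, after descending through $\Phi$, a word lying in exactly one of the two kernels. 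A continuous bijection from the compact space $3^\bN$ onto the Hausdorff space $\m R$ is then a homeomorphism, and $\m R$, being a continuous image of a compact space, is closed in $\m G_4$.

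\emph{Items (2) and (5).} The engine for periodicity is the length-reduction lemma: for $w$ in the first-level stabilizer one has $|w_0|+|w_1|\le|w|+1$, with a strict decrease whenever the coordinate of $\om$ being read sends one of $b,c,d$ to the trivial section, and $w^2$ lies in the first-level stabilizer whenever $w$ does not. If $\om\in I$ every symbol recurs infinitely often, so iterating $\Phi$ down the tree drives the length of a suitable power of any given element strictly below any prescribed bound, forcing that element to have order a power of $2$; if some symbol occurs only finitely often, a short explicit word --- a product of $a$ with one of $b_\om,c_\om,d_\om$ --- survives unchanged along an appropriate ray of the tree and hence has infinite order, so $G_\om$ is not periodic. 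For item (5), when $\om\in C$ one runs the recursive-descent decision procedure: to decide whether a word $w$ equals $e$, observe that if $w$ has an odd number of $a$'s it moves the first level and so $w\neq e$, while otherwise one forms $w_0,w_1$ --- using the single coordinate $\om_0$, supplied by an algorithm computing $\om$ --- and recurses, since $w=e$ if and only if $w_0=w_1=e$ and $|w_i|\le\tfrac{1}{2}|w|+1<|w|$ once $|w|\ge 3$. Conversely, if $G_\om$ has solvable word problem then $\om$ is computable, because for each $i$ there is a fixed finite list of words whose identity status in $G_\om$ determines $\om_i$, the sequence being encoded in the torsion behaviour of elements such as $ab_\om$ read at level $i$; hence $\om\in C$. (The same decoding gives, in passing, an alternative proof of injectivity in item (1).)

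\emph{Items (3) and (4).} For $\om\notin E$ the substantive point is the subexponential upper bound: the length-reduction lemma is sharpened to a genuine contraction, so that over a bounded number of levels straddling a variation point of $\om$ (which recur since $\om$ is not eventually constant) one obtains $\sum_{|v|=m}|w_v|\le\eta|w|+C$ with $\eta<1$ uniform; iterating this yields a bound $\exp(x^{\theta})$ with $\theta<1$ for the growth function of $G_\om$. The matching superpolynomial lower bound comes from a direct count of distinct group elements using the branching structure of $G_\om$ (nontrivial rigid stabilizers at every vertex), giving $\exp(c\sqrt{x})$; alternatively, $G_\om$ is infinite and not virtually nilpotent, so polynomial growth is excluded by Gromov's theorem. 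Hence $G_\om$ has intermediate growth for every $\om\notin E$. For $\om\in E$, item (4) and the exponential-growth half of item (3) are the content of \cite{BenliGrigorchuk14}: after replacement by the limit group, $G_\om$ is shown to possess an explicit finite-index subgroup that is metabelian --- obtained from the now eventually periodic recursion, with abelian derived subgroup --- and to have exponential growth, which is coherent since a finitely generated virtually metabelian group that is not virtually nilpotent has exponential growth by the Milnor--Wolf theorem.

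\emph{Main obstacle.} The technical heart is the contraction estimate underlying the subexponential upper bound in item (3) for \emph{every} $\om\notin E$ --- the classical argument singles out the case in which all three symbols recur, and one must check that, applied over suitable windows, the same reasoning handles all non-eventually-constant sequences --- together with the bookkeeping needed to verify item (4), virtual metabelianity over the degenerate locus $E$, for the modified family. The remaining assertions are comparatively routine consequences of the embeddings $\Phi$ and the length-reduction lemma.
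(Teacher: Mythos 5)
This theorem is not proved in the paper at all: it is imported verbatim from \cite{Grigorchuk84} (with item (5) and the treatment of the locus $E$ coming from \cite{BenliGrigorchuk14}), so there is no in-paper argument to compare yours against. Your outline is a faithful reconstruction of the standard proofs from those sources --- the embedding $\Phi\colon G_\om\hookrightarrow G_{\tau\om}\wr S_2$, the length-reduction lemma, recursive descent for the word problem, and the Lamplighter structure over $E$ --- and you correctly identify the contraction estimate for arbitrary $\om\notin E$ and the analysis over $E$ as the places where the real work lives. At the level of an outline it is essentially the only route, and it is sound.

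Three caveats worth recording. First, your parenthetical that the torsion groups are ``pairwise non-isomorphic as abstract groups'' is false as stated: a permutation of the symbols $\{0,1,2\}$ permutes the triples $\beta,\zeta,\delta$ and hence induces abstract isomorphisms $G_\om\cong G_{\sigma(\om)}$ after relabelling $b,c,d$; what injectivity of $\om\mapsto(G_\om,S_\om)$ actually requires --- and what holds --- is that distinct $\om$ yield distinct \emph{marked} groups, i.e.\ distinct normal subgroups of $\mathbb F_4$, which one checks by exhibiting a distinguishing relation at the first coordinate of disagreement. Second, the phrase ``uniform $\eta<1$ over a bounded number of levels straddling a variation point'' is too optimistic: if only two symbols recur in $\om$, one generator type never acquires a trivial section, the variation points may be arbitrarily sparse, and the resulting subexponential bound is not of the form $\exp(x^\theta)$ for a $\theta<1$ uniform in $\om$; the genuine argument is more delicate, as you acknowledge in your final paragraph. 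Third, for $\om\in E$ the point of $\m R$ is the \emph{limit} marked group, not the literal (degenerate) group of tree automorphisms, so every argument you run through sections --- the word-problem algorithm, the virtual metabelianity, injectivity at points of $E$ --- depends on the transferred embedding asserted at the end of the appendix; that transfer is part of what \cite{BenliGrigorchuk14} supplies and should not be treated as automatic.
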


In  \cite[Theorem 2]{BenliGrigorchuk14}, it was proven that if 
$\om \in 3^\mathbb{N}$ is a constant sequence, then 
$G_\alpha \cong L\rtimes \mathbb{Z}_2$, where $L=\mathbb{Z}_2\wr \mathbb{Z}$ is the Lamplighter group. In particular, $G_\alpha$ is solvable for constant $\alpha$. Since $G_\alpha$ embeds into $G_{\tau \alpha}\wr S_2$, one deduces that  for every $\om\in E$, $G_\om$ is solvable.

 To obtain a family in $\mathcal{G}_2$ with similar properties, we follow \cite{BenliGrigorchuk14}. For each $\om \in 3^\mathbb{N}$, let  $T_\om=\{d_\om,ab_\om\}\subset G_\alpha$ and $L_\om=\langle T_\om \rangle \le G_\alpha$. 
Also let $\mathcal{L}= \{(L_\om,T_\om) \mid \om \in 3^\mathbb{N}\}\subseteq \m{G}_2$. We have the following.

 \begin{theorem}
\cite{BenliGrigorchuk14}\mbox{}
 \begin{itemize}
  \item[1)] $\m{L}\subseteq \mathcal{G}_2$ is homeomorphic to $3^\mathbb{N}$ via the map $\om \mapsto (L_\om,T_\om)$,
  \item[2)] $L_\om$ is a periodic group if and only if $\om \in I$.
  \item[3)] $L_\om$ has exponential  growth for $\om \in E$ and intermediate  growth otherwise.
  \item[4)]   $L_\om$ is solvable if and only if  $\om\in E $.
  \item[5)] $L_{\alpha}$ has decidable word problem if and only if $\alpha \in C$.
   \end{itemize}
 \end{theorem}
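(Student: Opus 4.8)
The plan is to reduce everything to two inputs: the Grigorchuk theorem for $\m{R}$ stated just above, and the structural fact from \cite{BenliGrigorchuk14} that $L_\om$ is a subgroup of finite index --- indeed of index $2$ --- in $G_\om$ for every $\om\in 3^\mathbb{N}$. Granting this finite-index fact, parts 2), 3), 4) and 5) follow from soft general principles, and part 1) follows from a Grigorchuk-type convergence estimate.

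For part 1), I would first verify that $\om\mapsto (L_\om,T_\om)$ is continuous: the ball of radius $k$ in the Cayley graph of $(L_\om,T_\om)$ is determined by knowing which words of length $\le k$ in $T_\om=(d_\om,ab_\om)$ --- equivalently, which words of length $\le 2k$ in $S_\om$ --- are trivial in $G_\om$, and this is read off from the ball of radius $4k$ in the Cayley graph of $(G_\om,S_\om)$, which by $\m{R}\cong 3^\mathbb{N}$ depends only on a bounded-length prefix of $\om$. Injectivity runs in the opposite direction: tracking the sections of $d_\om$ and of the powers of $ab_\om$ at the first few levels of $T_2$ (exactly as in the analysis of $\m{R}$ via the recursion $\Phi$ and the maps $\beta,\zeta,\delta$) shows that the length-$n$ prefix of $\om$ is recoverable from a ball of radius $O(n)$ in the Cayley graph of $(L_\om,T_\om)$; hence distinct $\om$ give non-isomorphic marked groups. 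Since $3^\mathbb{N}$ is compact and $\m{G}_2$ is Hausdorff, a continuous injection is a homeomorphism onto its image.

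Parts 2) and 3) are then immediate: a finitely generated group and a finite-index subgroup of it are quasi-isometric, hence have the same growth type, and each is periodic if and only if the other is --- if $H\le G$ has finite index and $H$ is torsion, then $\langle g\rangle\cap H$ has finite index in $\langle g\rangle$ for every $g\in G$, forcing $\langle g\rangle$ to be finite. So parts 2) and 3) reduce to parts 2) and 3) of the Grigorchuk theorem applied to $G_\om$. For part 4): if $\om\in E$ then $G_\om$ is solvable by the consequence of \cite{BenliGrigorchuk14} recalled above, hence so is its subgroup $L_\om$; conversely, if $\om\notin E$ then $L_\om$ has intermediate growth by part 3), so it is not solvable, since by the Milnor--Wolf theorem a finitely generated solvable group has either polynomial or exponential growth. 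For part 5): the generators of $L_\om$ are explicit words in $S_\om$, so the word problem of $L_\om$ reduces to that of $G_\om$; and since $[G_\om:L_\om]=2$, with the coset of a word in $S_\om$ computed from the parity of its number of $a$-, $b_\om$- and $c_\om$-letters, an effective Reidemeister--Schreier rewriting reduces the word problem of $G_\om$ to that of $L_\om$. Hence part 5) reduces to part 5) of the Grigorchuk theorem.

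The main obstacle is the finite-index claim together with the injectivity half of part 1): establishing $[G_\om:L_\om]=2$ uniformly in $\om$ and recovering $\om$ from the local structure of $(L_\om,T_\om)$ both require a concrete computation with the recursive definitions of $a,b_\om,c_\om,d_\om$. This is exactly the content of the construction in \cite{BenliGrigorchuk14}; once it is granted, the remainder of the theorem is formal.
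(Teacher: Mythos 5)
Your reduction is sound, and it is essentially the only sensible route: the paper itself offers no proof of this statement (it is quoted verbatim from \cite{BenliGrigorchuk14}, with the appendix only recalling the construction of $T_\om$ and $L_\om$), so there is no internal argument to compare against, and your derivation of 2)--5) from the theorem about $\m{R}$ via commensurability invariance (periodicity, growth type, Milnor--Wolf for non-solvability, and effective coset rewriting for the word problem) is correct as stated. Two remarks. First, the finite-index input you defer to the citation does not need to be taken on faith: since $\{e,b_\om,c_\om,d_\om\}$ is a Klein four-group (so $c_\om=b_\om d_\om$ and $[b_\om,d_\om]=e$, relations of bounded length that persist in the limit groups used for $\om\in E$), one has $a(ab_\om)a=(ab_\om)^{-1}$ and $ad_\om a=(ab_\om)d_\om(ab_\om)^{-1}$, so $a$ normalizes $L_\om$; as $G_\om=\langle a,b_\om,d_\om\rangle=\langle L_\om,a\rangle$ and $a^2=e$, this gives $[G_\om:L_\om]\le 2$ for every $\om$, which is all your arguments require (exact index $2$ then follows generically from the parity homomorphism killing $ab_\om$ and $d_\om$). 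Second, two caveats worth making explicit: for $\om\in E$ the argument only works for the modified (limit) marked groups of $\m{R}$ --- with the raw recursive definitions $d_\om$ or $b_\om$ degenerates for constant $\om$ and $L_\om$ would be cyclic, contradicting 3) --- and the injectivity half of part 1) (recovering $\om$ from the local structure of $(L_\om,T_\om)$) is the one step that genuinely still rests on the computations of \cite{BenliGrigorchuk14}, exactly as you acknowledge; continuity plus compactness then upgrades it to a homeomorphism as you say.
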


\bibliography{references}{}
\bibliographystyle{amsalpha}

\end{document}